\documentclass{amsart}

\usepackage{amsmath,amsfonts,amssymb,amsthm}
\usepackage{graphicx}
\usepackage{caption}
\usepackage{subcaption}
\usepackage[colorlinks=true, allcolors=blue]{hyperref}
\usepackage{tikz}
\usetikzlibrary{calc}

\pgfdeclarelayer{background}
\pgfdeclarelayer{foreground}
\pgfsetlayers{background,foreground}
\newtheorem{thm}{Theorem}

\newtheorem{prop}[thm]{Proposition}

\newtheorem{claim}[thm]{Claim}

\newcommand{\sat}{{\rm sat}}
\newcommand{\F}{{\mathcal F}}
\newcommand{\C}{{\mathcal C}}
\newcommand{\D}{{\mathcal D}}
\newcommand{\power}{{\mathcal P}}
\newcommand{\Q}{{\mathcal Q}}
\newcommand{\B}{{\mathcal B}}
\newcommand{\chain}{\C^{\flat}}

\title{Induced saturation of the poset $2C_2$}
\author{Ryan R. Martin}
\address{Department of Mathematics, Iowa State University, Ames, Iowa, USA}
\author{Nick Veldt}
\email{rymartin@iastate.edu, mnveldt@iastate.edu}
\keywords{extremal set theory, poset saturation}
\subjclass{06A07,05D05}
\begin{document}
\begin{abstract}
    Given a set $X$, the power set $\power(X)$, and a finite poset $(P,\leq_P)$, a family $\F\subseteq\power(X)$ is said to be induced-$P$-free if there is no injection $\varphi: P\rightarrow\F$ such that $\varphi(p)\subseteq\varphi(q)$ if and only if $p\leq_{P} q$ for every $p,q \in P$. The family $\F$ is \emph{induced-$P$-saturated} if it is maximal with respect to being induced-$P$-free. If $n=|X|$, then the size of the smallest induced-$P$-saturated family in $\power(X)$ is denoted $\sat^*(n,P)$. 

    The poset $2C_2$ is two incomparable 2-chains (the Hasse diagram is two vertex-disjoint edges) and Keszegh, Lemons, Martin, P\'alv\"olgyi, and Patk\'os proved that $n+2\leq\sat^*(n,2C_2)\leq 2n$ and gave one isomorphism class of an induced-$2C_2$-saturated family that achieves the upper bound. 

    We show that the lower bound can be improved to $3n/2 + 1/2$ by examining the necessary structure of a saturated family. In addition, we provide many examples of induced-$2C_2$-saturated families of size $2n$ in $\power(X)$ where $|X|=n$. 
    
\end{abstract}
\maketitle
\section{Introduction}
Given a set $Y$, we say that a collection $\F \subseteq Y$ is \emph{saturated} with respect to a property $\Q$ if $\F$ does not exhibit the property $\Q$, but $\F\cup\{y\}$ has $\Q$ for all $y \in Y \setminus \F$. The \emph{saturation number} is the number of elements in the smallest saturated subcollection of $Y$. 

In extremal poset theory we mostly consider the environment $Y$ to be the $n$-dimensional Boolean lattice $\B_n$, and the property $\Q$ to be the existence of a specific poset as a (weak) subposet or as an induced subposet. The poset $(P, \leq_P)$ is a \emph{weak subposet} of poset $(P',\leq_{P'})$ if there is an injection $\varphi: P\rightarrow P'$ such that $\phi(p_1)\leq_{P'}\phi(p_2)$ whenever $p_1\leq_{P}p_2$. The poset $(P, \leq_P)$ is an \emph{induced subposet} of $(P',\leq_{P'})$ if there is an injection $\varphi: P\rightarrow P'$ such that $\phi(p_1)\leq_{P'}\phi(p_2)$ if and only if $p_1\leq_{P}p_2$. 

Saturation has been studied in graph theory beginning with Erd\H{o}s, Hajnal, and Moon~\cite{EHM}, see also a dynamic survey by Currie, Faudree, Faudree and Schmitt~\cite{FFS}. The notion of induced saturation in graphs was introduced by the first author and Smith~\cite{MS} and subsequently studied~\cite{BESYY,ACs} but the notion of induced saturation is not very natural in graphs and, in this context, requires the introduction of so-called trigraphs, defined by Chudnovsky~\cite{Chudnovsky}.

In posets, induced saturation is natural when the underlying poset is a Boolean lattice. A family $\F \subseteq \B_n$ is said to be \emph{induced-$P$-saturated} if $\F$ does not contain $P$ as an induced subposet, but for any element $S \in \B_n$, $S \notin F$, $\F \cup \{S\}$ contains $P$ as an induced subposet. The \emph{induced saturation number} of $P$, denoted $\sat^*(n,P)$, is the number of elements in the smallest induced-$P$-saturated family $\F \subseteq \B_n$. In this context, the idea was introduced by Ferrara, Kay, Kramer, Martin, Reiniger, Smith, and Sullivan~\cite{FKKMRSS}, and has been studied for a wide variety of different subposets in that setting~\cite{GERB, MNS,KLMPP,MSW,FPSST,BGJJ,BGIJ,IvanButterfly,IvanDiamond,Liu}.

In this paper, we examine the induced saturation number of the poset $2C_2$, which is the poset that consists of four elements $\{A, A', B, B'\}$ among which $A \subsetneq A'$, $B \subsetneq B'$ are the only relations. See Figure~\ref{fig:2C2}.

\begin{figure}[h]
\begin{tikzpicture}[scale=1,vertex/.style={draw=black, very thick, fill=white, circle, minimum width=2pt, inner sep=2pt, outer sep=1pt},edge/.style={very thick}]
    \coordinate (zero) at (0,0);
    \begin{pgfonlayer}{foreground}
    \node[vertex] (A) at ($(zero)+(-0.5,-0.5)$) {};
    \node[left,xshift=-1pt] at (A) {$A$};
    \node[vertex] (Ap) at ($(zero)+(-0.5,0.5)$) {};
    \node[left,xshift=-1pt] at (Ap) {$A'$};
    \node[vertex] (B) at ($(zero)+(0.5,-0.5)$) {};
    \node[right,xshift=1pt] at (B) {$B$};
    \node[vertex] (Bp) at ($(zero)+(0.5,0.5)$) {};
    \node[right,xshift=1pt] at (Bp) {$B'$};
    \end{pgfonlayer}
    \begin{pgfonlayer}{background}
    \draw[edge] (A) -- (Ap);
    \draw[edge] (B) -- (Bp);
    \end{pgfonlayer}
\end{tikzpicture}
\caption{Hasse diagram of $2C_2$.}
\label{fig:2C2}
\end{figure}
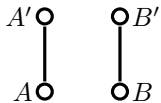

The saturation number of $2C_2$ on the $n$-dimensional Boolean lattice is denoted $\sat^*(n,2C_2)$.

\begin{thm}[Keszegh, Lemons, Martin, P\'alv\"olgyi, Patk\'os~\cite{KLMPP}]
    For all $n\geq 2$, $n+2 \leq \sat^*(n,2C_2)\leq 2n$.
\end{thm}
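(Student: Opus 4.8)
The plan is to prove the two bounds separately, using one structural fact throughout: if $\F$ is induced-$2C_2$-saturated then $\emptyset\in\F$ and $[n]\in\F$. Indeed, $\emptyset$ and $[n]$ are comparable to every set, so neither can appear in an induced copy of $2C_2$ (the image of a minimal element of $2C_2$ must be incomparable to the image of the whole opposite chain, which is impossible for $\emptyset$, and the image of $\emptyset$ cannot be the top of a $2$-chain either); hence adding $\emptyset$ or $[n]$ to any family never creates an induced $2C_2$, and saturation forces both into $\F$. The same remark shows that a family $\F$ is induced-$2C_2$-free if and only if $\F\setminus\{\emptyset,[n]\}$ is, and that every induced copy of $2C_2$ in $\F$ uses four sets lying in $\F\setminus\{\emptyset,[n]\}$.

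\textbf{Upper bound.} I would exhibit the family
\[
\F^*=\{\emptyset,[n]\}\cup\{\{i\}:i\in[n]\}\cup\{[k]:2\le k\le n-1\},\qquad [k]:=\{1,\dots,k\},
\]
of size $2+n+(n-2)=2n$. It is induced-$2C_2$-free: in any induced $2C_2$ inside $\F^*$ all four sets lie in $\F^*\setminus\{\emptyset,[n]\}$, and since each singleton is a minimal element there, the two top sets of the two $2$-chains are both of the form $[k]$ with $2\le k\le n-1$ — but any two such sets are comparable, contradicting the required incomparability. It is saturated: given $S\notin\F^*$ we have $2\le|S|\le n-1$ and $S$ is not an initial segment, so $m:=\max S$ satisfies $3\le m\le n$ and there is $j\in[m-1]\setminus S$; then $[m-1]\in\F^*$, and $\{j\}\subsetneq[m-1]$ and $\{m\}\subsetneq S$ are $2$-chains in $\F^*\cup\{S\}$ whose four sets $\{j\},[m-1],\{m\},S$ one checks to be pairwise cross-incomparable, yielding an induced $2C_2$. (For $n=2$, $\F^*=\B_2$ and there is nothing to verify.)

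\textbf{Lower bound.} I would show $|\F\setminus\{\emptyset,[n]\}|\ge n$, so that $|\F|\ge n+2$. The idea is to attach to each $i\in[n]$ a witness $W_i\in\F\setminus\{\emptyset,[n]\}$ and prove the $W_i$ are distinct. If $\{i\}\in\F$, take $W_i=\{i\}$; if $[n]\setminus\{i\}\in\F$, take $W_i=[n]\setminus\{i\}$; otherwise neither is in $\F$, and saturation applied to $\{i\}$ yields an induced $2C_2$ in $\F\cup\{\{i\}\}$ in which $\{i\}$ must be the bottom of its chain (it cannot be a top, since the only set below it is $\emptyset$, which would have to be incomparable to the opposite chain). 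This produces a set $X_i\in\F$ with $\{i\}\subsetneq X_i$, together with a $2$-chain $Y_i\subsetneq Y_i'$ in $\F$ cross-incomparable to $\{i\}$ and $X_i$; dually, saturation applied to $[n]\setminus\{i\}$ yields a set $D_i\in\F$ with $D_i\subsetneq[n]\setminus\{i\}$. The witness $W_i$ is then to be chosen among $X_i$, $D_i$ and their companions.

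The crux, and the main obstacle, is to make the $W_i$ pairwise distinct and distinct from $\emptyset,[n]$: a priori one could have $X_i=X_j$ for $i\ne j$ (both lying in a common set of $\F$), or $X_i$ could be a co-singleton already used as some $W_k$, or $D_i$ a singleton already used. Ruling these out requires exploiting the induced-$2C_2$-freeness of $\F$ — for instance, if $X_i=X_j$, then $Y_i\subsetneq Y_i'$ and $Y_j\subsetneq Y_j'$ are each cross-incomparable to $X_i$ and, both being in $\F$, cannot be cross-incomparable to one another, and this constrained interaction supplies a new set assignable to one of $i,j$. Carrying out this case analysis carefully, while keeping track of how many elements are already accounted for by the singletons and co-singletons present in $\F$, is where the real work lies.
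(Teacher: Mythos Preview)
Your upper-bound argument is correct and uses the same family as the paper (a maximal chain together with all singletons); your verification of saturation is a minor variant of the paper's, using $[m-1]$ where the paper uses either $[2]$ or $[l]$ with $l=\min([n]\setminus S)$, and both work.

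Your lower-bound argument has a genuine gap. You never actually define $W_i$ in the ``third case'' (neither $\{i\}$ nor $[n]\setminus\{i\}$ in $\F$): you only say $W_i$ is ``to be chosen among $X_i,D_i$ and their companions'' and concede that the required distinctness case analysis ``is where the real work lies.'' As written this is not a proof but a sketch of a hoped-for proof, and it is not clear how to complete it. The obvious obstructions are real: for distinct $i,j$ in the third case one may well have $X_i=X_j$ (any set of $\F$ containing both $i$ and $j$ could serve), and the companion chains $Y_i\subsetneq Y_i'$ and $Y_j\subsetneq Y_j'$ can overlap or even coincide; the observation that they cannot be cross-incomparable does not by itself produce a new set one can assign to $j$ that is guaranteed distinct from everything previously chosen. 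Making this injective-assignment strategy work would require a global argument (something like a system of distinct representatives), not just local repairs, and you have not supplied one.

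The paper's route to the lower bound is entirely different and much cleaner once the key lemma is in hand: it invokes the structural result (Theorem~\ref{thm:KLMPP} here) that every induced-$2C_2$-saturated family contains a full maximal chain in $\B_n$, giving $n+1$ elements immediately, and then observes that a chain alone is not saturated (adding any set off the chain creates no $2C_2$, since any copy would need two elements of the chain on one side, which are comparable), forcing at least one more set and hence $|\F|\ge n+2$. The substantive content is thus shifted to proving the maximal-chain lemma, which the paper does via a downset-ordering argument; your witness-assignment idea neither uses nor recovers this structure.
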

The upper bound comes from the following construction. Consider a family $\F$ consisting of a full chain and the singletons, for example, $\F = \{\emptyset,\{1\},\{1,2\},\ldots,[n],\linebreak\{2\},\{3\},\ldots,\{n\}\}$. It is easy to see that there is no induced copy of $2C_2$ and this is the entire Boolean lattice if $n=2$. Now let $n\geq 3$ consider any set $S \notin \F$. Let $l$ be the least element not in $S$, and let $m$ be the greatest element in $S$. Note that $l < m$, and that $3 \leq m$.

If $l \in \{1,2\}$, then $\{\{l\},\{1,2\},\{m\},S\}$ forms an induced $2C_2$ in $\F \cup \{S\}$. If $l \geq 3$, then $\{\{l\},[l],\{m\},S\}$ forms an induced $2C_2$.

The lower bound in Theorem 1 follows from the fact
\cite[Theorem 3.11]{KLMPP} that an induced-$2C_2$-saturated family
must contain a maximal chain. In Theorem ~\ref{thm:KLMPP}, we will go through this proof, as it is crucial to further tightening the lower bound.

We present a new result on the lower bound of $\sat^*(n,2C_2)$:
\begin{thm}
    $\frac{3}{2}n+\frac{1}{2} \leq \sat^*(n,2C_2)\leq 2n$. \label{thm:main}
\end{thm}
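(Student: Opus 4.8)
The plan is to sharpen the known lower bound $\sat^*(n,2C_2)\ge n+2$ to $\tfrac32 n + \tfrac12$ by analyzing more carefully the structure forced on an induced-$2C_2$-saturated family $\F$. We already know from \cite[Theorem 3.11]{KLMPP} (restated as Theorem~\ref{thm:KLMPP}) that such an $\F$ must contain a maximal chain $\chain = \{\emptyset = C_0 \subsetneq C_1 \subsetneq \cdots \subsetneq C_n = [n]\}$, which accounts for $n+1$ of the sets. The remaining task is to show that $\F$ must contain at least $\tfrac12 n - \tfrac12$ additional sets, i.e. roughly one extra set for every two ``levels.'' The natural approach is to argue that for most indices $i$, the presence of $C_i$ and $C_{i+1}$ in $\chain$ forces $\F$ to contain some set that is neither $C_i$ nor $C_{i+1}$ and is ``attached'' near that part of the chain, and then to show that a single such extra set can be charged to at most two consecutive indices.

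First I would set up the charging/counting framework: for each set $S \in \F \setminus \chain$, associate to $S$ the interval of indices $i$ such that $C_i \subsetneq S$ but $C_{i+1} \not\subseteq S$ together with $C_{i+1}\cap S$ being ``new'' — more precisely, track the index where $S$ branches off from the chain, say $j(S) = \max\{i : C_i \subseteq S\}$, and the index where the chain ``catches up,'' $k(S) = \min\{i : S \subseteq C_i\}$ (these may be $+\infty$ if $S$ is not below any chain element, but since $C_n = [n]$ we always have $k(S)\le n$). The key structural claim to establish is: \emph{for each $i$ with, say, $1 \le i \le n-1$, at least one of a bounded number of ``local configurations'' must occur, each of which produces an element of $\F\setminus\chain$ whose associated interval contains $i$ or $i\pm 1$.} The mechanism is the saturation condition: if no such extra set existed near level $i$, we could find a set $S$ (for instance one of the form $C_{i-1}\cup\{x\}$ for a suitable $x$, or $C_{i+1}\setminus\{x\}$) whose addition to $\F$ creates \emph{no} induced $2C_2$, contradicting saturation. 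Concretely, to create an induced $2C_2$ using $S$ and two chain elements $C_a \subsetneq C_b$, one needs $S$ incomparable to both, with some witness pair $A\subsetneq A'$ in $\F$ incomparable to $C_a, C_b$ — so if the only sets incomparable to $S$ in $\F$ form a chain themselves (e.g. just part of $\chain$), no copy arises.

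The main obstacle I anticipate is the case analysis needed to rule out configurations where a single extra set $S$ could ``cover'' many consecutive levels at once — i.e. showing that the intervals $[j(S), k(S)]$ cannot be too long, or that even if they are long, the saturation condition at the interior levels still forces additional distinct sets. In particular, a set $S$ with $C_j \subsetneq S$, $S \subsetneq C_k$, and $k - j$ large is incomparable to $C_{j+1},\dots,C_{k-1}$, and one must check that this incomparability alone does not let $S$ ``defend'' all of those levels against every candidate addition; the resolution should be that such an $S$, together with another such $S'$, would itself be forced to create an induced $2C_2$ unless $\F$ contains still more sets, again by the saturation/freeness dichotomy. I would organize this by fixing the maximal chain, partitioning $\F\setminus\chain$ according to the pairs $(j(S),k(S))$, and using a discharging argument that sends weight $1$ from each level $i\in\{1,\dots,n-1\}$ toward the nearest extra set defending it, showing each extra set receives weight at most $2$; this yields $|\F\setminus\chain| \ge (n-1)/2$, hence $|\F| \ge (n+1) + (n-1)/2 = \tfrac32 n + \tfrac12$. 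The upper bound $\sat^*(n,2C_2)\le 2n$ is already supplied by the construction in the excerpt (full chain plus all singletons), so nothing new is needed there beyond citing it.
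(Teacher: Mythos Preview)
Your high-level strategy---start from the guaranteed maximal chain $\chain$, assign one unit of charge to each of the $n-1$ ``levels'' along it, and discharge so that each element of $\F\setminus\chain$ absorbs at most two units---is exactly the shape of the paper's argument, and the final arithmetic $(n+1)+(n-1)/2=\tfrac32 n+\tfrac12$ is the same. But what you have written is a plan, not a proof: the two steps you yourself flag as ``the main obstacle'' are where all of the content lies, and neither is supplied.

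Concretely, the paper does not work with a vague notion of ``defending level $i$''; it fixes one specific test set per level, the \emph{shackle} $S_i=C_{i-1}\cup\{i+1\}$. This choice is what drives the whole argument: $S_i$ is comparable to every chain element except $C_i$, so any induced $2C_2$ containing $S_i$ can use at most one chain element and must therefore use at least two members of $\F\setminus\chain$. Your generic candidates $C_{i-1}\cup\{x\}$ do not have this property for arbitrary $x$, so the ``at least two off-chain witnesses'' step---which is what converts one unit of charge into half-units aimed at two sets---would not follow. Second, your assertion that each extra set receives weight at most $2$ is not a consequence of the interval $[j(S),k(S)]$ alone and is not obviously true in the form you state it. The paper actually proves something weaker-looking that happens to suffice: a non-shackle $F\in\F\setminus\chain$ can witness up to \emph{four} shackles, drawn from $\{S_p,S_{p+1},S_{p+2},S_{q-2},S_{q-1},S_q\}$ where $p=j(F)$, $q=k(F)$, with at most one from each of $\{S_{p+1},S_{p+2}\}$ and $\{S_{q-1},S_{q-2}\}$; a shackle $S_k\in\F$ can witness only $S_{k-1}$ and $S_{k+1}$. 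Because each missing shackle needs two witnesses, ``four'' for non-shackles and ``two plus itself'' for shackles both translate into absorbing at most two units of charge---your target---but establishing those witness bounds takes a substantial case analysis (Claims~\ref{claim:two_elements}--\ref{claim:twofour} in the paper), including a nontrivial exclusion argument showing $F$ cannot simultaneously witness $S_{p+1}$ and $S_{p+2}$. Until you identify the shackles and carry out that analysis, the discharging inequality is an assertion rather than an argument.
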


The improvement of the lower bound comes from a consideration of elements that are near to the required maximal chain.

We also employed a brute force algorithm to find saturated families and through computer search, verified that $\sat^*(n,2C_2) = 2n$ for all $n\leq 6$. In Appendix~\ref{sec:constructions}, we list all of the extremal constructions for $n\leq 5$.
\subsection{Terms and notation}

For a positive integer $n$, $[n]$ denotes the set $\{1,\ldots,n\}$. For consistency, $[0]=\emptyset$.  
The members of a poset are called \emph{vertices}. Let $A$ and $B$ be vertices of a poset $(P,\leq_P)$. If either $A\leq_P B$ or $A\geq_P B$, then we write $A\sim B$ and say that $A$ is \emph{related} to $B$. If $A\not\leq_P B$ and $A\not\geq_P B$, then we write $A\parallel B$ and say $A$ is \emph{unrelated} to $B$ or that $A$ and $B$ are \emph{incomparable}. To save time, we occasionally use $\{A,B\}\parallel\{C,D\}$ to denote that each of $A$ and $B$ is incomparable to each of $C$ and $D$. The notation $A\parallel\{C,D\}$, of course, means that $A$ is incomparable to each of $C$ and $D$. 

For any positive integer $n$, let $\B_n$ denote the \emph{Boolean lattice of dimension $n$}. Given $A\subseteq B$ in $\B_n$, the \emph{interval of $A$ and $B$} is defined to be $[A,B]=\{S : A\subseteq S\subseteq B\}$. The \emph{half-open intervals of $A$ and $B$} are $(A,B]=\{S : A\subsetneq S\subseteq B\}$ and $[A,B)=\{S: A\subseteq S\subsetneq B\}$. A \emph{chain} of length $m$ is a group of elements $A_1, \ldots, A_m$ such that $A_1 \subsetneq \ldots \subsetneq A_m$. A \emph{maximal chain} is a chain that is maximal with respect to length. In particular, any maximal chain in $\B_n$ has $n+1$ distinct vertices. 

For any set $S \in \B_n$, we use $\overline{S} := \{i \in [n] : i \notin S\}$ to represent the set complement. The vertices of $\B_n$ that contain only one element are called \emph{singletons}. The vertices of $\B_n$ that contain all but one element are called \emph{anti-singletons}.

\subsection{Organization} In Section~\ref{sec:maximal}, we state an important lemma from~\cite{KLMPP} that establishes the fact that an induced-$2C_2$-saturated family must contain a maximal chain and we provide a proof for completeness that uses some of the terminology we use in this paper. This is important for introducing some ideas that we use later. In Section~\ref{sec:mainproof}, we prove Theorem~\ref{thm:main}.  In Section~\ref{sec:examples}, we discuss the proliferation of induced-$2C_2$-saturated families in $\B_n$ of size $2n$, for small values of $n$, including a new family of valid constructions. In Section~\ref{sec:conc} we finish with some concluding remarks on our observations. In Appendix~\ref{sec:constructions}, we give constructions of such families for $n\leq 5$. 

\section{Maximal chains}
\label{sec:maximal}

The most important starting point for our result is a theorem by Keszegh et al., demonstrating that we may assume that the families we consider must have a maximal chain.
\begin{thm}[Keszegh et al.~\cite{KLMPP}]\label{thm:KLMPP}
    If $\F \subseteq 2^{[n]}$ is induced-$2C_2$-saturated, then $\F$ contains a maximal chain in $2^{[n]}$. \label{thm:KLMPP}
\end{thm}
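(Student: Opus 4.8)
The plan is to argue by contradiction: suppose $\F$ is induced-$2C_2$-saturated but contains no maximal chain of $\B_n$. First I would record the easy observations that $\emptyset \in \F$ and $[n] \in \F$ — if, say, $\emptyset \notin \F$, then adding $\emptyset$ must create an induced $2C_2$, but $\emptyset$ lies below everything, so it can only play the role of a minimal element $A$ of a chain $A \subsetneq A'$; the other chain $B \subsetneq B'$ already sits in $\F$ and is incomparable to $A'$, but then $\{A', B, B'\}$ together with any set strictly below $A'$ and incomparable to $B$ and $B'$... one has to check this carefully, but the upshot is that $\emptyset,[n] \in \F$, and more generally every singleton-or-not question reduces to understanding where chains can be ``blocked.''

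The core of the argument is the following. Since $\F$ has no maximal chain, there is a ``highest'' point of failure: choose a chain $\emptyset = C_0 \subsetneq C_1 \subsetneq \cdots \subsetneq C_k$ in $\F$ with $|C_k|$ as large as possible subject to $C_k$ being extendable-in-$\F$ no further, i.e.\ there is no $S \in \F$ with $C_k \subsetneq S$ and $|S| = |C_k|+1$ — and also pick the chain so that $C_k$ itself is a maximal (under inclusion) element of $\F$ below which this chain sits, or handle the two cases ($C_k$ maximal in $\F$ vs.\ $C_k$ has a strict superset in $\F$ but none of size $|C_k|+1$) separately. In the first case $C_k \neq [n]$, and I would consider adding a set $S$ with $C_k \subsetneq S$, $|S| = |C_k| + 1$; saturation forces an induced $2C_2$ using $S$. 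Then analyze which role $S$ plays: $S$ can be the top of a chain $A \subsetneq S$ with $A \in \F$, $A \subsetneq S$, and a second chain $B \subsetneq B'$ in $\F$ incomparable to both $A$ and $S$. The maximality in the choice of $C_k$, together with the fact that $C_0,\dots,C_k$ is a chain reaching up to $C_k \subsetneq S$, should let me produce an induced $2C_2$ already inside $\F$ (using pieces of the $C_i$'s in place of $S$), contradicting induced-$2C_2$-freeness of $\F$.

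The key technical lemma I would isolate and prove first is: if $C \in \F$ and there is no $S \in \F$ with $C \subsetneq S$ and $|S| = |C|+1$ (a ``local ceiling''), and $C \neq [n]$, then for any $x \notin C$ the set $C \cup \{x\}$, when added, creates an induced $2C_2$ whose bottom-left element $A$ satisfies $A \subsetneq C$; from this I can extract that $C$ has ``many'' elements of $\F$ strictly below it arranged in an incomparable-to-something way, and iterating / combining with the symmetric statement at the bottom of the lattice yields the contradiction. Essentially the same structure works from below using anti-singletons and $[n] \in \F$.

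The main obstacle I expect is the case analysis of \emph{which of the four roles} the newly added set $S = C \cup \{x\}$ plays in the forced induced $2C_2$, and ruling out the possibility that $S$ is an \emph{incomparable} element (a $B$ or $B'$) rather than sitting on top of a chain through $C$ — because then $S$'s relationship to $C_k = C$ is not directly exploited. I would handle this by choosing $x$ adversarially: since $C$ has no size-$(|C|+1)$ superset in $\F$, any $T \in \F$ with $C \subseteq T$ has $|T| \geq |C|+2$ or $T = C$, which constrains how $S$ can be incomparable to elements of $\F$; pushing this, I expect that for a well-chosen $x$ the only consistent role for $S$ is at the top of a chain extending $C$, at which point the maximality of the chain $C_0 \subsetneq \cdots \subsetneq C_k$ delivers the contradiction. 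Getting the choice of $C_k$ (and of $x$) exactly right so that every case collapses is the delicate part.
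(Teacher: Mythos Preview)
Your proposal is a plan, not a proof, and the gap you yourself flag --- the case analysis on which role $S = C_k \cup \{x\}$ plays in the forced $2C_2$ --- is real and does not close with only the local information you have set up. When $S$ is, say, the bottom of its edge ($S \subsetneq U$ with $U \in \F$) and the other edge $V \subsetneq W$ happens to satisfy $C_k \subseteq V$, you learn only that $\F$ contains sets $U, V, W$ strictly above $C_k$, none of size $|C_k|+1$; this does not contradict the maximality of your chain, and nothing about $C_0, \dots, C_{k-1}$ helps. Varying $x$ does not obviously rescue this, because different choices of $x$ may land you in different cases whose witnesses need not interact. The ``key technical lemma'' you state (that the bottom element $A$ of the forced copy must satisfy $A \subsetneq C$) is not true as written: $A$ can contain $x$ and omit an element of $C$, or $S$ can sit at the bottom of its edge so that no such $A$ exists at all.

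The paper's argument is global rather than local, and this is the missing idea. Proposition~\ref{prop:downsets} observes that for any $F, F' \in \F$ the open downsets $\D_\F(F)$ and $\D_\F(F')$ are comparable under inclusion; this gives a linear preorder on \emph{all} of $\F$ at once. Listing $\F$ as $F_1 = [n], F_2, \dots, F_m = \emptyset$ compatibly with this preorder and setting $G_j = \bigcap_{i \le j} F_i$, one checks (Claim~\ref{claim:gap}) that every set in each half-open interval $[G_{j+1}, G_j)$ must already lie in $\F$, which directly produces a maximal chain inside $\F$. No contradiction argument, no delicate choice of $x$, no role-by-role case split: the downset ordering organizes the entire family simultaneously, which is exactly what a one-step-at-a-time extension cannot do.
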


For a family $\F$ and a set $S$ we define the \emph{open downset} $\D_\F(S) := \{F \in \F : F \subsetneq S\}$. That is, $\D_\F(S)$ is the family of all subsets of $S$, not including $S$ itself. 

\begin{prop}[Keszegh et al.~\cite{KLMPP}]
    If $\F$ is induced-$2C_2$-free, then for any $F,F' \in \F$, one of three possibilities hold:
    \begin{itemize}
        \item $\D_\F(F) \subsetneq \D_\F(F')$
        \item $\D_\F(F') \subsetneq \D_\F(F)$
        \item $\D_\F(F) = \D_\F(F')$
    \end{itemize}
    \label{prop:downsets}
\end{prop}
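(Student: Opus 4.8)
The plan is to prove the contrapositive style dichotomy by extracting, from any violation, an explicit induced copy of $2C_2$. Suppose $F, F' \in \F$ and none of the three listed possibilities holds. Since the three options are exactly ``$\D_\F(F)$ and $\D_\F(F')$ are comparable (as sets) or equal,'' their failure means $\D_\F(F)$ and $\D_\F(F')$ are \emph{incomparable}: neither contains the other. Hence there is some $A \in \D_\F(F) \setminus \D_\F(F')$ and some $B \in \D_\F(F') \setminus \D_\F(F)$. Unpacking the definition of the open downset, this says $A \in \F$ with $A \subsetneq F$ but $A \not\subsetneq F'$, and $B \in \F$ with $B \subsetneq F'$ but $B \not\subsetneq F$.

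The next step is to check that $\{A, F, B, F'\}$ is an induced $2C_2$ in $\F$, with the two chains being $A \subsetneq F$ and $B \subsetneq F'$. We already have $A \subsetneq F$ and $B \subsetneq F'$ by construction. The incomparabilities to verify are: $A \parallel F'$, $B \parallel F$, $A \parallel B$, and $F \parallel F'$. For $A \parallel F'$: we know $A \not\subseteq F'$ (since $A \not\subsetneq F'$ and $A \ne F'$ — note $A \subsetneq F$ forces $A \ne F'$ only if $F \ne F'$, which we should address; if $F = F'$ then $\D_\F(F) = \D_\F(F')$ trivially, so we may assume $F \ne F'$), so we just need $F' \not\subseteq A$; but $F' \subseteq A \subsetneq F$ would put $F' \in \D_\F(F)$, and then $B \subsetneq F' \subseteq A$ gives $B \in \D_\F(A)$... actually more directly $B \subsetneq F' \subsetneq F$ would contradict $B \notin \D_\F(F)$. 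So $F' \not\subseteq A$, giving $A \parallel F'$. Symmetrically $B \parallel F$. For $A \parallel B$: if $A \subseteq B$ then $A \subsetneq B \subsetneq F'$ (using $A \ne B$, which holds since $A \subsetneq F$, $B\not\subsetneq F$) would give $A \in \D_\F(F')$, a contradiction; symmetrically $B \subseteq A$ leads to $B \in \D_\F(F)$, contradiction; so $A \parallel B$.

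The remaining and slightly more delicate point is $F \parallel F'$. If $F \subsetneq F'$ then $A \subsetneq F \subsetneq F'$ would force $A \in \D_\F(F')$, contradiction; if $F' \subsetneq F$ then $B \subsetneq F' \subsetneq F$ forces $B \in \D_\F(F)$, contradiction; and $F = F'$ was handled above. Hence $F \parallel F'$. Also one should note $A \ne F'$ and $B \ne F$ and $A \ne B$ and all four sets are distinct, which follows from the strict inclusions and incomparabilities just established, so $\varphi$ mapping the four poset elements to $\{A, F\}$ and $\{B, F'\}$ is genuinely an injection. This contradicts $\F$ being induced-$2C_2$-free, completing the proof.

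I do not anticipate a real obstacle here; the argument is a short case analysis. The only thing requiring minor care is bookkeeping the edge cases ($F = F'$, and the distinctness of the four witnesses) so that the injection $\varphi \colon 2C_2 \to \F$ is well-defined and the ``if and only if'' in the definition of induced subposet is fully checked — in particular confirming that the only relations among $\{A, F, B, F'\}$ are $A \subsetneq F$ and $B \subsetneq F'$, which is precisely the list of incomparabilities verified above.
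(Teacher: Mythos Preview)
Your proof is correct and follows exactly the same approach as the paper: pick $G\in\D_\F(F)\setminus\D_\F(F')$ and $G'\in\D_\F(F')\setminus\D_\F(F)$ and observe that $G\subsetneq F$, $G'\subsetneq F'$ is an induced $2C_2$. The paper's proof is a single sentence asserting this without checking the incomparabilities, whereas you carefully verify all six pairwise relations and distinctness; the extra care is fine, though the handling of $A\neq F'$ is slightly roundabout (it follows most cleanly from your later argument that $F'\subseteq A$ would force $B\subsetneq F'\subseteq A\subsetneq F$, contradicting $B\notin\D_\F(F)$).
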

\begin{proof}
    If there exists $G \in \D_\F(F) \setminus \D_\F(F')$ and $G' \in \D_\F(F') \setminus \D_\F(F)$, then $G\subsetneq F,G'\subsetneq F'$ is an induced $2C_2$.
\end{proof}

\begin{proof}[Proof of Theorem~\ref{thm:KLMPP}]
    Let $\F$ be a induced-$2C_2$-saturated family. Each of the sets $\emptyset$ and $[n]$ must belong to $\F$, as they are comparable to every other set, and so cannot be part of a $2C_2$.

    Now, we order the sets in $\F$ by their downsets. We define the preorder $F \leq F'$ if $\D_\F(F) \subseteq \D_\F(F')$. Note that this is not necessarily a partial order because if $\{1\},\{2\}\in\F$, then $\D_\F\bigl(\{1\}\bigr)=\D_\F\bigl(\{2\}\bigr)=\emptyset$. Now, choose an arbitrary linear order $\emptyset = F_m \leq \cdots \leq F_2 \leq F_1 = [n]$ that respects the preorder. 

    Now, we introduce a second family of sets, defined as $G_j = \bigcap_{i=1}^j F_i$ for any $j = 1,2,\ldots,m$. Note that $G_j \supseteq G_{j+1}$ for $j = 1,2,\ldots,m-1$.
    
\begin{claim}
    For any $h = 1,2,\ldots, m$, we have $\D_\F(G_h) \subseteq \D_\F(F_h) \subseteq \D_\F(G_h) \cup \{G_h\}$.
    \label{claim:order}
\end{claim}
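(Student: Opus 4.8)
The plan is to prove Claim~\ref{claim:order} by induction on $h$, exploiting the structure of the $G_h$ as running intersections of the $F_i$ together with the dichotomy from Proposition~\ref{prop:downsets}. The base case $h=1$ is immediate: $G_1 = F_1 = [n]$, so $\D_\F(G_1) = \D_\F(F_1)$ and the chain of inclusions is trivial. For the inductive step, I would compare $G_{h+1} = G_h \cap F_{h+1}$ with $F_{h+1}$. The key observation is that $G_{h+1} \subseteq F_{h+1}$ by construction, so $\D_\F(G_{h+1}) \subseteq \D_\F(F_{h+1}) \cup \{G_{h+1}\}$ holds automatically whenever $G_{h+1} \subsetneq F_{h+1}$, and when $G_{h+1} = F_{h+1}$ the statement collapses to an equality; so the genuinely substantive inclusion to establish is $\D_\F(G_{h+1}) \subseteq \D_\F(F_{h+1})$, or equivalently that $\D_\F(F_{h+1})$ is not missing elements that lie below $G_{h+1}$.

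For the substantive direction I would argue as follows. Take $F \in \D_\F(G_{h+1})$, i.e. $F \in \F$ with $F \subsetneq G_{h+1} = \bigcap_{i=1}^{h+1} F_i$. Then $F \subsetneq F_i$ for every $i \le h+1$, so in particular $F \in \D_\F(F_i)$ for all such $i$. I want to conclude $F \in \D_\F(F_{h+1})$, which is exactly the statement $F \subsetneq F_{h+1}$ — but that is already what we have. So actually the content must be the reverse: showing $\D_\F(F_{h+1}) \subseteq \D_\F(G_{h+1}) \cup \{G_{h+1}\}$, i.e. that every element strictly below $F_{h+1}$ in $\F$ is either strictly below $G_{h+1}$ or equals $G_{h+1}$. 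Here is where I would use the linear order and Proposition~\ref{prop:downsets}: since $F_1 \ge F_2 \ge \cdots \ge F_{h+1}$ in the preorder, we have $\D_\F(F_{h+1}) \subseteq \D_\F(F_i)$ for all $i \le h$. I would then show by a secondary induction that $\D_\F(F_{h+1}) \subseteq \D_\F(G_h) \cup \{G_h\}$ (the inductive hypothesis applied to $F_h$, combined with $\D_\F(F_{h+1})\subseteq\D_\F(F_h)$), and then push through the intersection with $F_{h+1}$: any $F \in \D_\F(F_{h+1})$ satisfies $F \subseteq G_h$ (being in $\D_\F(G_h)\cup\{G_h\}$) and $F \subsetneq F_{h+1}$, hence $F \subseteq G_h \cap F_{h+1} = G_{h+1}$, giving $F \in \D_\F(G_{h+1}) \cup \{G_{h+1}\}$ as desired.

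The main obstacle I anticipate is bookkeeping the two borderline cases cleanly — namely when $F = G_h$ exactly (so $F$ need not be strictly inside $G_{h+1}$) and when $G_{h+1} = G_h$ (no progress in the intersection) — and making sure the "$\cup\{G_h\}$" slack propagates correctly through the intersection step without accidentally enlarging it to "$\cup\{G_h, G_{h+1}\}$". The resolution is that if $F = G_h$ and also $F \subsetneq F_{h+1}$, then $F = G_h \subseteq F_{h+1}$ combined with $F \subsetneq F_{h+1}$ forces $F = G_h \cap F_{h+1} = G_{h+1}$, so $F$ lands exactly on the allowed boundary element $G_{h+1}$; and if $G_{h+1} = G_h$ the claim for $h+1$ is literally the claim for $h$ restricted via $\D_\F(F_{h+1}) \subseteq \D_\F(F_h)$. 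Once the claim is in hand, the theorem follows by noting that the $G_h$ form a descending chain from $[n]$ to $\emptyset$ in which consecutive terms differ by at most the removal of a controlled amount — and then filling in to a maximal chain, using that each $G_h \in \F$ (as it is a witness in its own downset structure, or can be shown to lie in $\F$), which is the part the rest of the proof of Theorem~\ref{thm:KLMPP} handles.
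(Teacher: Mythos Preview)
Your inductive argument is correct, but it is more elaborate than necessary. The paper's proof is a direct two-line argument with no induction: since $G_h\subseteq F_h$, the first inclusion $\D_\F(G_h)\subseteq\D_\F(F_h)$ is immediate; for the second, any $F\in\D_\F(F_h)$ lies in $\D_\F(F_i)$ for \emph{every} $i\le h$ simultaneously (by the preorder), so $F\subsetneq F_i$ for all $i\le h$, whence $F\subseteq\bigcap_{i=1}^h F_i=G_h$, placing $F$ in $\D_\F(G_h)\cup\{G_h\}$.

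Your induction effectively rediscovers this intersection fact one index at a time: you use the inductive hypothesis $\D_\F(F_h)\subseteq\D_\F(G_h)\cup\{G_h\}$ to get $F\subseteq G_h$, then intersect with $F_{h+1}$ to land in $G_{h+1}$. This works, and your handling of the borderline case $F=G_h$ is fine, but the machinery is heavier than the problem requires. The direct argument bypasses all the ``slack propagation'' bookkeeping you worry about, since it never needs to carry the $\cup\{G_h\}$ term from one step to the next. Also note that your initial paragraph briefly confuses which inclusion is the substantive one before self-correcting; in a final write-up you would want to excise that detour.
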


\begin{proof}
    By definition, $G_h = \bigcap_{i=1}^h F_i \subseteq F_h$, and so any subset of $G_h$ will be a subset of $F_h$, yielding the first relation.

    Then observe that any element $F$ in $\D_\F(F_h)$ will also be a subset of $F_i$ for $i \leq h$, due to the defined ordering, and so will be a subset of $G_h$ as well. The only element of $\D_\F(F_h)$ that is not contained in $\D_\F(G_h)$, then, would have to be $G_h$ itself. This concludes the proof of Claim~\ref{claim:order}.
\end{proof}

\begin{claim}
    For any $j = 1,2,\ldots, m-1$, if $G_{j+1} \subseteq S \subsetneq G_j$, then $S \in \F$.
    \label{claim:gap}
\end{claim}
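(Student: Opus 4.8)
\textbf{Proof plan for Claim~\ref{claim:gap}.}

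The plan is to argue by contradiction: suppose there is a set $S$ with $G_{j+1} \subseteq S \subsetneq G_j$ but $S \notin \F$. Since $\F$ is induced-$2C_2$-saturated, adding $S$ must create an induced copy of $2C_2$, so there exist $A, A', B, B' \in \F \cup \{S\}$ forming a $2C_2$ with $S$ playing the role of one of the four vertices. The first step is to identify which role $S$ plays. I would begin by recording the basic containment facts about $S$: since $S \subseteq G_j \subseteq F_i$ for all $i \leq j$, the set $S$ is comparable (below) to $F_1, \ldots, F_j$; and since $S \supseteq G_{j+1} = \bigcap_{i=1}^{j+1} F_i$, any $F \in \F$ with $F \subsetneq S$ satisfies $F \subsetneq G_j$, and I would want to leverage Claim~\ref{claim:order} to say such $F$ lies in $\D_\F(G_j) \subseteq \D_\F(F_j)$, hence $F \subsetneq F_{j+1}$ as well (here one must be slightly careful about whether $F$ could equal $G_{j+1}$, but $G_{j+1} \subseteq S$ and $G_{j+1} \in \F$ would only help).

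The key observation I expect to drive the proof is that $\D_\F(S) = \D_\F(G_j)$, or at least that $\D_\F(S)$ and $\D_\F(G_j)$ differ by at most the element $G_j$ (which is not below $S$), so effectively $\D_\F(S) \subseteq \D_\F(G_j) \cap \D_\F(G_{j+1})$-type reasoning applies. Granting this, consider the $2C_2$ created in $\F \cup \{S\}$ with vertices forming chains $A \subsetneq A'$ and $B \subsetneq B'$, all four pairwise-across incomparable. The set $S$ cannot be a minimal element ($A$ or $B$) of such a copy: if $S \subsetneq A'$ with $A' \in \F$, then since $\D_\F(S) \subseteq \D_\F(G_j)$, the whole configuration $B \subsetneq B'$ would have to avoid being below $S$, but I would show the other chain $B, B'$ can be "pushed down" to $G_{j+1}$ or to members of $\D_\F(G_j)$ to produce an induced $2C_2$ already inside $\F$, contradicting that $\F$ is induced-$2C_2$-free. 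Similarly, if $S$ is a maximal element ($A'$ or $B'$) of the copy, then its partner $A \in \D_\F(S) \subseteq \D_\F(G_j) \subseteq \D_\F(F_j)$, so $A \subsetneq G_j$; replacing $S$ by $G_j$ keeps $A \subsetneq G_j$ and I would need to check $G_j$ is still incomparable to $B$ and $B'$ — here the defining property $\D_\F(G_j) = \D_\F(S)$ together with $G_j \supsetneq S$ is exactly what makes $G_j \not\subseteq B'$ impossible to rule out cheaply, so this incomparability check is the delicate point.

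The main obstacle, then, is verifying that whichever role $S$ takes, one can substitute an element of $\F$ (namely $G_j$, or $G_{j+1}$, or a member of $\D_\F(G_j)$) for $S$ while preserving the induced-$2C_2$ structure, thereby contradicting that $\F$ itself is induced-$2C_2$-free. The bookkeeping is in showing the incomparabilities survive the substitution; I would handle this by a short case analysis on the role of $S$, in each case using Proposition~\ref{prop:downsets} applied to the pair $\{S, G_j\}$ (or $\{S,G_{j+1}\}$) — since $\D_\F(S)$ and $\D_\F(G_j)$ are nested or equal by that proposition, and in fact sandwiched as above, the two sets are "indistinguishable from below" in $\F$, which is precisely what forces the contradiction. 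Once every case yields an induced $2C_2$ within $\F$, the assumption $S \notin \F$ is untenable, completing the proof of Claim~\ref{claim:gap}.
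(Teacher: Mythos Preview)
Your setup is right --- assume $S\notin\F$, get an induced $2C_2$ in $\F\cup\{S\}$, and note $S\subseteq G_j\subseteq F_i$ for all $i\le j$. But the heart of your plan, the substitution of $G_j$ or $G_{j+1}$ for $S$ to manufacture a $2C_2$ inside $\F$, does not go through. First, neither $G_j$ nor $G_{j+1}$ is known to lie in $\F$; they are merely intersections of members of $\F$. Second, your intended equality $\D_\F(S)=\D_\F(G_j)$ is not established and is in general false: an $F\in\F$ with $F\subsetneq G_j$ need not satisfy $F\subseteq F_{j+1}$ (the ordering only gives $\D_\F(F_{j+1})\subseteq\D_\F(F_j)$, the reverse of what you would need), so $F\not\subseteq G_{j+1}$ and $F\not\subseteq S$ are perfectly possible. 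Third, Proposition~\ref{prop:downsets} is stated for pairs in $\F$, so invoking it for the pair $\{S,G_j\}$ is not legitimate. The ``delicate incomparability check'' you flag is therefore not just delicate --- it fails.

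The paper's proof avoids substitution entirely. Once you have the $2C_2$, look at the chain $A\subsetneq B$ in $\F$ with $\{A,B\}\parallel S$ (this is simply the edge of the $2C_2$ not containing $S$). Write $B=F_k$. If $k\le j$ then $S\subsetneq G_j\subseteq F_k=B$, contradicting $B\parallel S$. If $k\ge j+1$ then $A\in\D_\F(F_k)\subseteq\D_\F(F_{j+1})$, and Claim~\ref{claim:order} with $h=j+1$ gives $\D_\F(F_{j+1})\subseteq\D_\F(G_{j+1})\cup\{G_{j+1}\}$; since $G_{j+1}\subseteq S$ this forces $A\subseteq S$, contradicting $A\parallel S$. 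The point you are missing is that casing on the index $k$ of $B$ in the fixed linear order, together with one application of Claim~\ref{claim:order}, already pins $A$ or $B$ to be comparable with $S$ --- no replacement of $S$ is needed.
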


    \begin{proof}
    Suppose there is an $S\not\in\F$ and $j\in\{1,2,\ldots,m-1\}$ with $G_{j+1} \subseteq S \subsetneq G_j$. Then, by the definition of saturation, adding $S$ to $\F$ creates an induced copy of $2C_2$. Thus, there must exist a pair $A \subsetneq B$ in $\F$ incomparable to $S$ such that $A = F_i$ and $B = F_k$ for some $k < i$. If $k \leq j$, then $S \subsetneq G_j \subseteq F_k = B$ gives a contradiction. Thus $k \geq j+1$. Applying Claim~\ref{claim:order} to $h = j+1$ gives
    $$ A \in \D_\F(F_k) \subseteq \D_\F(F_{j+1}) \subseteq \D_\F(G_{j+1}) \cup \{G_{j+1}\}\subseteq \D_{\F}(S) \cup \{S\}, $$
    contradicting the assumption that $A$ and $S$ are incomparable. This concludes the proof of Claim~\ref{claim:gap}. 
    \end{proof}

 Since $\emptyset=G_m\subseteq \cdots \subseteq G_2\subseteq G_1=[n]$ and $G_1\in\F$, we can use induction on $j \in \{1,2,\ldots,m-1\}$ to prove the existence of a maximal chain: Assume that the interval $[G_{j},G_1]$ contains a maximal chain of members of $\F$. If $G_{j+1} = G_j$, then $[G_{j+1},G_1]$ will trivially also contain such a maximal chain. If $G_{j+1} \neq G_j$, then by Claim~\ref{claim:gap}, every member of the half-open interval $[G_{j+1},G_j)$ is in $\F$, and therefore $[G_{j+1},G_1]$ will contain a maximal chain of members of $\F$. This concludes the proof of Theorem~\ref{thm:KLMPP}.
\end{proof}

The existence of a maximal chain already gives a lower bound of $n+1$, and since all those elements are related, at minimum we need one more element to be capable of forcing a $2C_2$, giving the bound of $n+2$ from~\cite{KLMPP}. To improve the lower bound further, we use a counting argument on elements that are more difficult to account for than others.

\section{Proof of Theorem~\ref{thm:main}}
\label{sec:mainproof}

Let $\F \subseteq \B_n$ be an induced-$2C_2$-saturated family. By Theorem~\ref{thm:KLMPP}, without loss of generality, $\F$ contains the sets $C_0, C_1, C_2, \ldots, C_n$, where $C_i=[i]$ for $i=0,1,\ldots,n$. Let $\chain=\{C_0,C_1,C_2,\ldots,C_n\}$. The $n-1$ elements of the form $S_i = [i-1] \cup \{i+1\}$ for $i = 1,2,\ldots, n-1$ are called \emph{shackles}. We think of these as the elements closest to the chain. Note that $S_i$ is related to every element of $\chain$ other than $C_i$. 

Shackles are distinctive in that for any shackle $S_i$, any $T\not\in\chain$ and any $C_j,C_k\in\chain$, they cannot create an induced copy of $2C_2$. This is because $C_j$ and $C_k$ are related to each other and at least one of them must be related to $S_i$ also. This characteristic will allow us to guarantee a certain amount of non-shackle elements in $\F$ to improve the lower bound.

First, however, we establish a useful claim about non-shackle elements of $\F$.

\begin{claim}
    Let $\F$ be induced-$2C_2$-saturated, with maximal chain $\chain$, and let there exist $A,B \in \F\setminus\chain$ such that $B \subsetneq A$. Then one of the following occurs:
    \begin{itemize}
        \item There is at least one $j\in\{1,\ldots,n-1\}$ such that $B\subsetneq C_j\subsetneq A$.
        \item There is a unique $j\in\{1,\ldots,n-1\}$ such that $B\subseteq S_j\subseteq A$ and $C_j\parallel\{A,B\}$.
    \end{itemize}
    \label{claim:two_elements}
\end{claim}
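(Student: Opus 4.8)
The plan is to locate $A$ and $B$ relative to the reference chain $\chain$ by tracking two parameters: $r:=\min(\overline{A})$ and $\max(B)$. Note first that since $A,B\in\F\setminus\chain$ while $\emptyset=C_0$ and $[n]=C_n$ lie in $\chain$, we have $\emptyset\neq B\subsetneq A\neq[n]$; in particular $\overline{A}\neq\emptyset$, so $r$ is well defined with $1\le r\le n-1$ (if $r=n$ then $\overline A=\{n\}$ and $A=C_{n-1}\in\chain$). By minimality of $r$, $C_{r-1}=[r-1]\subseteq A$, and this containment is proper because $A\notin\chain$; moreover $r\notin A$, so $A\supsetneq[r-1]$ forces $A$ to contain an element exceeding $r$, hence $\max(A)\ge r+1$. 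Finally $r\notin A\supseteq B$, so $r\notin B$, and $\max(B)\le\max(A)$.

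Next I would split on $\max(B)$ versus $r$. \textbf{Case 1: $\max(B)\le r-1$.} Then $C_{\max(B)}=[\max(B)]\subseteq[r-1]\subseteq A$, and $B\subsetneq C_{\max(B)}\subsetneq A$ since neither $A$ nor $B$ is on $\chain$; as $1\le\max(B)\le r-1\le n-2$, the first bullet holds with $j=\max(B)$. \textbf{Case 2: $\max(B)\ge r$.} Since $\max(B)\in B\subseteq A$ while $r\notin A$, in fact $\max(B)\ge r+1$. Here I split once more. If $\max(B)\ge r+2$, I claim $\{C_r,C_{r+1},B,A\}$ is an induced $2C_2$ in $\F$ (note $C_{r+1}$ exists since $r+1\le n$, and the four sets are distinct because $A,B\notin\chain$): we have $C_r\subsetneq C_{r+1}$ and $B\subsetneq A$, and each of $C_r=[r]$, $C_{r+1}=[r+1]$ is incomparable to both $A$ and $B$ — not contained in $A$ or in $B$ because $r\notin A\cup B$ lies in both $[r]$ and $[r+1]$, and not containing $A$ or $B$ because $\max(A)\ge\max(B)\ge r+2>r+1$. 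This contradicts induced-$2C_2$-freeness, so we must have $\max(B)=r+1$.

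It remains to treat $\max(B)=r+1$, where I set $j:=r\in\{1,\dots,n-1\}$ and verify the second bullet directly. We have $S_j=[j-1]\cup\{j+1\}=[r-1]\cup\{r+1\}\subseteq A$, since $[r-1]\subseteq A$ and $r+1=\max(B)\in B\subseteq A$; we have $B\subseteq S_j$, since every element of $B$ is at most $\max(B)=r+1$ and $r\notin B$; and $C_j=[r]$ is incomparable to both $A$ and $B$ (it contains $r$, which lies in neither, and neither $A$ nor $B$ is contained in $[r]$ because $\max(A),\max(B)\ge r+1$). For uniqueness, any $j'$ with $B\subseteq S_{j'}\subseteq A$ satisfies $[j'-1]\subseteq A$, forcing $j'\le r$, while $C_{j'}\parallel A$ forces $[j']\not\subseteq A$, i.e.\ $j'\ge r$; hence $j'=r$.

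The one step with real content is ruling out $\max(B)\ge r+2$: the rest is bookkeeping about which $C_i$ lie between, or beside, $A$ and $B$, but this case is exactly where the hypothesis that $\F$ is induced-$2C_2$-free (not merely that it contains $\chain$) is used, and it requires pinning down the correct pair $C_r,C_{r+1}$ that completes the forbidden configuration together with $B\subsetneq A$. Minor care is needed to confirm those four sets are distinct and that $C_{r+1}$ is a genuine member of $\chain$, which follows from $r\le n-1$.
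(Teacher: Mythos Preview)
Your proof is correct and follows essentially the same route as the paper's. The paper parametrizes by $j_A=\max\{i:C_i\subsetneq A\}$ and $j_B=\min\{i:C_i\supsetneq B\}$, which in your notation are $j_A=r-1$ and $j_B=\max(B)$, and then case-splits on $j_B-j_A$; your four cases correspond exactly (with the paper's separate case $j_B-j_A=1$ absorbed into your observation that $r\notin B$ forces $\max(B)\neq r$). Two minor differences: for the contradiction case you use $C_r,C_{r+1}$ whereas the paper uses $C_{j_A+1},C_{j_B-1}$, and you supply an explicit uniqueness argument for $j$ in the second bullet, which the paper's proof omits.
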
\begin{figure}[h]
\hfill
\begin{subfigure}[t]{0.4\textwidth}
\begin{tikzpicture}[scale=1,vertex/.style={draw=black, very thick, fill=white, circle, minimum width=2pt, inner sep=2pt, outer sep=1pt},edge/.style={very thick}]
    \coordinate (zero) at (-3.0,0);
    \begin{pgfonlayer}{foreground}
    \node[vertex] (0) at ($(zero)+(0,-2)$) {};
    \node[left,xshift=-2pt] at (0) {$\emptyset$};
    \node[vertex] (1) at ($(zero)+(0,2)$) {};
    \node[left,xshift=-2pt] at (1) {$[n]$};
    \node[vertex] (B) at ($(zero) +(1.5,-1)$) {};
    \node[below,yshift=-2pt] at (B) {$B$};
    \node[vertex] (A) at ($(zero) +(1.5,1)$) {};
    \node[above,yshift=2pt] at (A) {$A$};
    \node[vertex] (C) at ($(zero) + (-0.6,0)$) {};
    \node[left,xshift=-3pt] at (C) {$C_{j}$};
    \end{pgfonlayer}
    
    \begin{pgfonlayer}{background}
    \draw[edge, line width = 2.8pt] (0) to [out = 120, in = 275] (C);
    \draw[edge, line width = 2.8pt] (C) to [out = 85, in = 240] (1);

    \draw[edge] (C) -- (A);
    \draw[edge] (B) -- (A);
    \draw[edge] (B) -- (C);
    
    \end{pgfonlayer}
    \end{tikzpicture}
    \caption{There exists at least one $C_j$ such that $B \subsetneq C_j \subsetneq A$.}
    \label{fig:shackle1}
\end{subfigure} \hfill
\begin{subfigure}[t]{0.4\textwidth}
\begin{tikzpicture}[scale=1,vertex/.style={draw=black, very thick, fill=white, circle, minimum width=2pt, inner sep=2pt, outer sep=1pt},edge/.style={very thick}]
    \begin{pgfonlayer}{foreground}
    \coordinate (one) at (2.5,0);
    \node[vertex] (02) at ($(one)+(0,-2)$) {};
    \node[left,xshift=-2pt] at (02) {$\emptyset$};
    \node[vertex] (12) at ($(one)+(0,2)$) {};
    \node[left,xshift=-2pt] at (12) {$[n]$};
    \node[vertex] (A2) at ($(one)+(1.5,1)$) {};
    \node[above,yshift=2pt] at (A2) {$A$};
    \node[vertex] (B2) at ($(one) +(1.5,-1)$) {};
    \node[below,yshift=-2pt] at (B2) {$B$};
    \node[vertex] (C2) at ($(one) + (-0.6,0)$) {};
    \node[left,xshift=-3pt] at (C2) {$C_{j}$};
    \node[vertex] (Cplus) at ($(one) + (-0.55,0.6)$) {};
    \node[left,xshift=-3pt] at (Cplus) {$C_{j+1}$};
    \node[vertex] (Cminus) at ($(one) + (-0.55,-0.6)$) {};
    \node[left,xshift=-3pt] at (Cminus) {$C_{j-1}$};
    \node[vertex] (S2) at ($(one) + (0.2,0)$) {};
    \node[right,xshift=3pt] at (S2) {$S_{j}$};
    \end{pgfonlayer}
    \begin{pgfonlayer}{background}

        \draw[edge, line width = 2.8pt] (02) to [out = 120, in = 280] (Cminus);
        \draw[edge, line width = 2.8pt] (Cminus) to [out = 98, in = 273] (C2);
        \draw[edge, line width = 2.8pt] (C2) to [out = 87, in = 263] (Cplus);
    \draw[edge, line width = 2.8pt] (Cplus) to [out = 80, in = 240] (12);

    \draw[edge] (S2) -- (B2);
    \draw[edge] (S2) -- (A2);
    \draw[edge] (B2) -- (A2);
    \draw[edge] (S2) -- (Cplus);
    \draw[edge] (S2) -- (Cminus);
    \end{pgfonlayer}
\end{tikzpicture}

\caption{There exists a unique $C_j$ such that $C_j \parallel \{A,B\}$. Note that $A = S_j$ or $B = S_j$ are possibilities here.}
\label{fig:shackle2}
\end{subfigure}
\caption{The two types of interaction with the maximal chain of a related pair in Claim~\ref{claim:two_elements}.}
\end{figure}
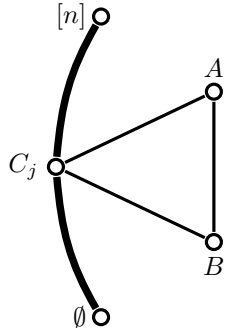
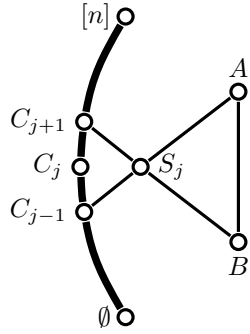

\begin{proof}
    Let $j_A$ be the largest integer such that $C_{j_A}\subsetneq A$ and let $j_B$ be the smallest integer such that $C_{j_B}\supsetneq B$.

    If $j_A\geq j_B$, then it is the case that $B\subsetneq C_{j_B}\subseteq C_{j_A}\subsetneq A$, which is the first condition, by setting $j$ to be, say, $j_B$. See Figure~\ref{fig:shackle1}.

    If $j_B-j_A=1$, then $B\subsetneq C_{j_B}=[j_B]$ but $B\not\subseteq C_{j_B-1}=[j_B-1]$. Thus $j_B\in B$. But then $j_B\in A$ also and since $[j_B-1]=[j_A]\subsetneq A$ it must be that $[j_B]\subsetneq A$, contradicting the maximality of $j_A$. 

    If $j_B-j_A=2$, let $j=j_A+1=j_B-1$. By the definition of $j_A$, $A\supsetneq C_{j-1}$ but $A\not\supseteq C_j$, thus $j\not\in A$ and hence $j\not\in B$. Further, by the definition of $j_B$, $B\subsetneq C_{j+1}$ but $B\not\subseteq C_j$, thus $j+1\in B$ and $j+1\in A$. Consequently, $B\subseteq S_j=[j-1]\cup\{j+1\}\subseteq A$, which is the second condition. See Figure~\ref{fig:shackle2}.

    If $j_B-j_A\geq 3$, then $\{C_{j_A+1},C_{j_B-1}\}\parallel\{B,A\}$, hence there is a $2C_2$, a contradiction.
\end{proof}

For a family $\F$ to be induced-$2C_2$-saturated, it must be true that every element not in $\F$ must form an induced $2C_2$ with some subset of $\F$. The shackles, requiring more than one element for this, are then a natural target for consideration.

For a shackle $S_i\not\in\F$, we say that an element $A\in\F$ \emph{witnesses} $S_i$ if $\F \cup \{S_i\}$ contains an induced $2C_2$ that includes $A$. So, we consider the methods in which $S_i$ can be witnessed.

\begin{claim}\label{cl:case breakdown}
A given shackle $S_i\not\in\F$, $i\in\{1,\ldots,n-1\}$ can be witnessed in one of 3 cases:
\begin{enumerate}
    \item $\exists\; A,B \in \F\setminus\chain: \hspace{13pt}A \,\parallel\, B,\; A\sim C_i,\; B\sim S_i,\; \{A,C_i\}\parallel \{B,S_i\}$. \label{it:1}
    \item $\exists\; A,B \in \F\setminus\chain: \hspace{13pt}A \supsetneq B,\; \exists \; j\neq i~\textrm{\rm such that}~\{A,B\} \parallel \{C_j,S_i\}$. \label{it:2}
    \item $\exists\; A,B,D \in \F\setminus\chain: A \supsetneq B,\; D \sim S_i,\; \{A,B\} \parallel \{D,S_i\}$. \label{it:3}
\end{enumerate}
Furthermore, in Case~(\ref{it:2}), the value of $j$ must be one of $\{i-1,i+1\}$ and $B\subseteq S_j\subseteq A$. 
\end{claim}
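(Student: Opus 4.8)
The plan is to analyze directly what it means for $\F\cup\{S_i\}$ to contain an induced $2C_2$, and to sort the possibilities according to how many of the four vertices of that copy lie on the chain $\chain$. First I would observe that $S_i$ itself must be one of the four vertices of the new induced $2C_2$, since $\F$ is induced-$2C_2$-free. Write the copy as $\{A,B\}\parallel\{D,E\}$ with $A\supsetneq B$ and $D\supsetneq E$, where one of $D,E$ equals $S_i$; say $E=S_i$ (the case $D=S_i$ is symmetric and just swaps which of the pair is the "bottom"). So we have $B\parallel S_i$ or $A\parallel S_i$ as appropriate, and $D\sim S_i$, with $\{A,B\}\parallel\{D,S_i\}$. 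The key structural input is that $S_i$ is comparable to every element of $\chain$ except $C_i$: so among the three non-$S_i$ vertices $A,B,D$, any that lies on $\chain$ must in fact equal $C_i$, and at most one of them can be $C_i$. This immediately cuts the casework down to: (a) none of $A,B,D$ is on $\chain$; (b) exactly one of them is $C_i$.

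Next I would rule out the subcase where the on-chain vertex is $B$ or $D$ with $D\supsetneq B$ forming the comparable pair $\{A,B\}$ containing $C_i$, using the fact that $S_i$ is the middle-shackle: if $C_i\in\{A,B\}$ then since $C_i\subseteq S_i$ would force $C_i\subseteq S_i$ (false only in that $S_i$ and $C_i$ are incomparable), we get a contradiction with $C_i\parallel S_i$; more carefully, $C_i=[i]$ and $S_i=[i-1]\cup\{i+1\}$ are incomparable, so $C_i$ cannot be $\subseteq$ or $\supseteq$ $S_i$, which is consistent, but $C_i$ cannot be in the pair $\{A,B\}$ because $\{A,B\}\parallel\{D,S_i\}$ would then force $C_i\parallel S_i$, which is fine — so actually the on-chain vertex must be $D$, the one comparable to $S_i$; but $D\sim S_i$ and $D\in\chain$ forces $D=C_i$, contradicting $D\sim S_i$. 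Hence in case (b) the vertex on the chain must instead be a $C_j$ with $j\ne i$ occurring in the comparable pair $\{A,B\}$ — i.e. either $A=C_j$ with $C_j\supsetneq B$, or $B=C_j$ with $A\supsetneq C_j$ — and with $\{A,B\}\parallel\{D,S_i\}$ where now $D$ is off-chain; relabeling, this is exactly Case~(\ref{it:2}) with the pair being $\{A',B'\}\parallel\{C_j,S_i\}$ after taking complements/dual as needed. Case (a) with all of $A,B,D$ off-chain, $D\sim S_i$, is precisely Case~(\ref{it:3}); and the remaining possibility, where the new $2C_2$ uses $S_i$ together with a second on-chain element $C_i$ and two off-chain elements split across the two pairs, is Case~(\ref{it:1}).

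For the final sentence, in Case~(\ref{it:2}) we have $A\supsetneq B$ in $\F\setminus\chain$ with $\{A,B\}\parallel C_j$, so $j\ne j$ for any $j$ with $C_j\subsetneq B$ or $C_j\supsetneq A$; applying Claim~\ref{claim:two_elements} to the pair $A\supsetneq B$, the first alternative there gives some $C_k$ with $B\subsetneq C_k\subsetneq A$, but then $\{C_k,S_i\}\parallel\{A,B\}$ would have to fail only if $C_k=C_i$... — more directly: Claim~\ref{claim:two_elements} says either some $C_k$ lies strictly between $B$ and $A$, or there is a unique $j$ with $B\subseteq S_j\subseteq A$ and $C_j\parallel\{A,B\}$. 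In the first case, every such $C_k$ with $B\subsetneq C_k\subsetneq A$ is comparable to $S_i$ unless $k=i$; but $B\subsetneq C_k\subsetneq A$ together with $\{A,B\}\parallel S_i$ and $C_k\sim S_i$ would let us replace to get an induced $2C_2$ inside $\F$ — contradiction — unless $k=i$, and $C_i$ between $B$ and $A$ with $\{A,B\}\parallel S_i$ is impossible because then $B\subsetneq C_i=[i]\subsetneq S_i$ forces... this needs care, but the upshot is that the first alternative is incompatible with $\{A,B\}\parallel\{C_j,S_i\}$ for any valid $j$, so we are in the second alternative: $B\subseteq S_j\subseteq A$ with $C_j\parallel\{A,B\}$, and one checks $S_j$ being nested between $B$ and $A$ with $\{A,B\}\parallel S_i$ forces $j\in\{i-1,i+1\}$ (if $j$ were further from $i$, then $C_{j}$ and $S_i$ together with $A,B$ relations push an extra element giving a $2C_2$ in $\F$).

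\textbf{Main obstacle.} The genuinely delicate part is the very last claim — pinning $j$ down to $\{i-1,i+1\}$ and verifying $B\subseteq S_j\subseteq A$ — because it requires combining Claim~\ref{claim:two_elements} with the precise combinatorics of how $S_i$, $S_j$, and the relevant chain elements $C_{i-1},C_i,C_{i+1}$ sit relative to $A$ and $B$; getting the indices exactly right (rather than off by one) and making sure no spurious induced $2C_2$ already exists in $\F$ will need the careful element-chasing. The earlier case split is comparatively routine once one commits to "$S_i$ is a vertex of the new copy" and "on-chain vertices of that copy are very constrained."
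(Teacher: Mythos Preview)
Your high-level plan --- $S_i$ must appear in the new induced $2C_2$, then split on how many of the other three vertices lie on $\chain$ --- is exactly the paper's approach. But the execution contains a real error that derails the case analysis. In your first paragraph you assert that ``among the three non-$S_i$ vertices $A,B,D$, any that lies on $\chain$ must in fact equal $C_i$''. This is false for $D$: in your setup $D\sim S_i$, so if $D\in\chain$ then $D=C_j$ for some $j\neq i$; in fact $D$ \emph{cannot} be $C_i$. Your second paragraph collides with this (``$D\sim S_i$ and $D\in\chain$ forces $D=C_i$'' is exactly backwards), and the attempted fix that places a $C_j$ with $j\neq i$ inside the incomparable pair $\{A,B\}$ is impossible, since $\{A,B\}\parallel S_i$ forces any chain member there to be $C_i$. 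The clean trichotomy is simply: the single chain vertex is either in the pair containing $S_i$, hence equals some $C_j$ with $j\neq i$ (Case~(\ref{it:2})); or in the other pair, hence equals $C_i$ (Case~(\ref{it:1})); or there is no chain vertex (Case~(\ref{it:3})). That two chain vertices cannot both appear is immediate: any two are comparable, hence would form one edge of the $2C_2$, forcing both to be incomparable to $S_i$ --- but only $C_i$ has that property.

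For the ``furthermore'' clause you are right to invoke Claim~\ref{claim:two_elements}, but you overcomplicate and do not finish the argument. The first bullet of that claim is ruled out directly: if $B\subsetneq C_k\subsetneq A$ for some $k$, then either $C_j\subseteq C_k\subsetneq A$ or $C_j\supseteq C_k\supsetneq B$, contradicting $C_j\parallel\{A,B\}$. So the second bullet holds with that very $j$, giving $B\subseteq S_j\subseteq A$. Now use the combinatorics of shackles, not a hunt for a hidden $2C_2$: the shackle $S_j$ is comparable to every shackle except $S_{j-1}$ and $S_{j+1}$. Since $B\subseteq S_j\subseteq A$ and $\{A,B\}\parallel S_i$, we must have $S_j\parallel S_i$ (else $S_j\subseteq S_i$ gives $B\subseteq S_i$, or $S_j\supseteq S_i$ gives $A\supseteq S_i$), hence $i\in\{j-1,j+1\}$. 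This is the short route the paper takes; your sketch of ``pushing an extra element to get a $2C_2$ in $\F$'' is neither needed nor correct as stated.
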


\begin{proof}
    Upon adding $S_i$ to $\F$, there must be an induced copy of $2C_2$ using $S_i$. No such $2C_2$ can contain two elements of $\chain$. Therefore, there must be (at least) two elements in $\F\setminus\chain$ that form this copy of $2C_2$. 

    If the witness uses an element of $\chain$ unrelated to $S_i$, then it must be $C_i$ and the other elements must be $A,B\in\F\setminus\chain$ such that $A\sim C_i$ and $B\sim S_i$ and $\{A,C_i\}\parallel \{B,S_i\}$. This is Case~(\ref{it:1}).

    If the witness uses an element of $\chain$ related to $S_i$, then call it $C_j$. The other edge of the $2C_2$ must contain related elements $B\subsetneq A$ in $\F\setminus \chain$. 
    
    As to the ``furthermore'' part of the proof, observe that since $A$ and $B$ must be unrelated to $C_j$, then by Claim~\ref{claim:two_elements}, we have $B \subseteq S_j \subseteq A$. As the shackle $S_j$ is related to every chain element except $C_j$, and thus related to all shackles except $S_{j-1}$ and $S_{j+1}$, we must have $i \in \{j-1,j+1\}$ or else one of $\{A,B\}$ will relate to $S_i$. All of this is Case~(\ref{it:2}).

    If the witness uses no elements of $\chain$, then there must be $A,B,D\in\F\setminus\chain$ such that $A\supset B$ and $D\sim S_i$ and $\{A,B\}\parallel\{D,S_i\}$. This is Case~(\ref{it:3}).
\end{proof}

Conveniently, Case~(\ref{it:3}) is superfluous, as we demonstrate in Claim~\ref{claim:no_case3}.

\begin{claim}
    If a shackle $S_i \notin \F$ is witnessed in a Case~(\ref{it:3}) fashion by elements $A, B$, and $D$, then each of those elements also witness $S_i$ via Case~(\ref{it:1}) or Case~(\ref{it:2}).
    \label{claim:no_case3}
\end{claim}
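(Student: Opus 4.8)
The plan is to take a Case~(\ref{it:3}) witness $\{A,B,D\}$, with $A\supsetneq B$ in $\F\setminus\chain$, $D\sim S_i$, and $\{A,B\}\parallel\{D,S_i\}$, and to show that the related pair $A\supsetneq B$ must interact with the chain in one of the two ways guaranteed by Claim~\ref{claim:two_elements}. In the first sub-case, some $C_j$ satisfies $B\subsetneq C_j\subsetneq A$. Since $A\parallel S_i$ and $B\parallel S_i$, the chain element $C_j$ sandwiched strictly between them cannot be $S_i$-related... but more importantly, $C_j$ is related to $S_i$ unless $j=i$; and $j=i$ is impossible because $C_i\parallel S_i$ would force (from $B\subsetneq C_i\subsetneq A$) a relation between $S_i$ and one of $A,B$ only if... actually one checks directly: if $j=i$ then $B\subsetneq C_i\subsetneq A$ together with $A\parallel S_i$ forces $S_i\not\supseteq A\supseteq C_i$ consistent, but $S_i$ and $C_i$ are incomparable, so this is fine — hence we instead observe $C_j$ with $j\ne i$ is related to $S_i$, and then $\{A,B\}\parallel\{C_j,S_i\}$ is precisely a Case~(\ref{it:2}) witness (after verifying $j\in\{i-1,i+1\}$ via the ``furthermore'' reasoning, i.e.\ $S_j$ being related to all shackles but $S_{j\pm1}$). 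So in this sub-case $A$, $B$ witness $S_i$ via Case~(\ref{it:2}), and $D$ is no longer needed; but we still must show $D$ itself witnesses $S_i$ via Case~(\ref{it:1}) or~(\ref{it:2}), which I address below.

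In the second sub-case of Claim~\ref{claim:two_elements}, there is a unique $j$ with $B\subseteq S_j\subseteq A$ and $C_j\parallel\{A,B\}$. Again $C_j$ related to $S_i$ needs $j\ne i$; if $j=i$ then $C_i\parallel\{A,B\}$ and $S_i\parallel\{A,B\}$, so $\{A,B,C_i,S_i\}$... but $C_i\parallel S_i$, so that is not a $2C_2$ by itself, yet here we actually have $B\subseteq S_i\subseteq A$, contradicting $S_i\parallel\{A,B\}$. Hence $j\ne i$, so $C_j\sim S_i$, and since $S_j$ is related to every shackle except $S_{j\pm1}$ we again force $i\in\{j-1,j+1\}$, giving $\{A,B\}\parallel\{C_j,S_i\}$ — a Case~(\ref{it:2}) witness. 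So in both sub-cases the pair $A\supsetneq B$ re-witnesses $S_i$ via Case~(\ref{it:2}), which takes care of the elements $A$ and $B$.

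It remains to handle $D$. Here $D\sim S_i$, say WLOG $D\supsetneq S_i$ (the case $D\subsetneq S_i$ is symmetric, using downsets instead of upsets, or handled by an analogous argument). We have $\{A,B\}\parallel D$ from the original Case~(\ref{it:3}) hypothesis. I want to show $D$ participates in a Case~(\ref{it:1}) or Case~(\ref{it:2}) witness. The idea is to compare $D$ with the chain: since $D\supsetneq S_i\supseteq C_{i-1}$ and $D\parallel$ something non-chain, $D\notin\chain$; let $C_k$ be the largest chain element below $D$. If $k\ne i$, then $C_i\parallel D$ is possible or not — I should instead look for a partner for $D$: note $B\parallel D$ and $B\subsetneq S_j\subseteq A$ with $j\in\{i-1,i+1\}$. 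The pair $\{B,D\}$ together with $S_i$ and an appropriate chain element $C_i$ (which is incomparable to both $S_i$ and, I will argue, to exactly one of $B,D$ in the right configuration) should realize Case~(\ref{it:1}): we need $A'\parallel B'$, $A'\sim C_i$, $B'\sim S_i$, $\{A',C_i\}\parallel\{B',S_i\}$, and taking $A'=D$ (with $D\sim S_i$ but we need $D\parallel C_i$) or $A'=B$ seems delicate.

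I expect \textbf{the handling of $D$} to be the main obstacle, precisely because $D$'s only guaranteed relation is $D\sim S_i$, and converting the ``useless'' third element of a no-chain witness into a participant of a with-chain witness requires locating the right chain element. The resolution should exploit that $C_i\parallel S_i$ and that $D\sim S_i$: if $D\supsetneq S_i$ then $D\not\subseteq C_i$ (as $S_i\not\subseteq C_i$), so either $D\supsetneq C_i$ or $D\parallel C_i$; in the latter case $\{A',B'\}=\{B,D\}$ or $\{A,D\}$ with $C_i$ gives Case~(\ref{it:1}) directly since $C_i\parallel S_i$, $C_i\parallel D$, and $C_i$ is related to $A$ (as $C_i\subsetneq A$, because $C_{i-1}\subseteq S_j\subseteq A$ forces, when $j=i-1$, $C_{i-1}\subsetneq A$, and one more step to $C_i$ needs care) — so the bookkeeping reduces to a short case analysis on whether $D\supsetneq C_i$ and on $j=i-1$ versus $j=i+1$. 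Once $D\parallel C_i$ fails, i.e.\ $D\supsetneq C_i$, then $D\supsetneq C_i\supseteq S_i$ would contradict $D\parallel$ nothing — actually then $D\supsetneq C_i$ and we can instead pair $D$ with $A$ or $B$ and the chain element $C_k$ where $k$ is chosen so $C_k\parallel S_i$, which again can only be $k=i$, looping back; so the finitely many configurations all collapse to Cases~(\ref{it:1})/(\ref{it:2}), completing the proof.
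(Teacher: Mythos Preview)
Your plan—apply Claim~\ref{claim:two_elements} to $A\supsetneq B$ and split into its two bullets—matches the paper's, and your second-bullet argument for $A$ and $B$ is correct. But your first-bullet argument is wrong: when $B\subsetneq C_j\subsetneq A$, both $A$ and $B$ are \emph{related} to $C_j$, so $\{A,B\}\parallel\{C_j,S_i\}$ is simply false and there is no Case~(\ref{it:2}) witness here. You correctly deduce $j=i$ via the sandwich (if $C_j\sim S_i$ then one of $A,B$ would be related to $S_i$), but your next clause ``hence we instead observe $C_j$ with $j\ne i$\ldots'' contradicts this and asserts an incomparability that does not hold. The paper's resolution of this sub-case is quite different: once $j=i$, the sandwich $B\subsetneq C_i\subsetneq A$ together with $D\parallel\{A,B\}$ forces $D\parallel C_i$, and then $\{A,C_i\}\parallel\{D,S_i\}$ and $\{B,C_i\}\parallel\{D,S_i\}$ are each Case~(\ref{it:1})—so $A$, $B$, and $D$ are all handled simultaneously here, not via Case~(\ref{it:2}) at all.

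Your handling of $D$ in the second bullet is also incomplete: you correctly flag it as the main obstacle but never close the case analysis, ending with hand-waving (``the finitely many configurations all collapse''). The paper's step is short and concrete: since $j\in\{i-1,i+1\}$, one checks directly that either $B\subsetneq C_i$ (when $j=i-1$, as $B\subseteq S_{i-1}\subsetneq C_i$) or $A\supsetneq C_i$ (when $j=i+1$, as $A\supseteq S_{i+1}\supsetneq C_i$), and then the corresponding one of $\{B,C_i\}\parallel\{D,S_i\}$ or $\{A,C_i\}\parallel\{D,S_i\}$ furnishes a Case~(\ref{it:1}) witness containing $D$. Your attempted analysis via ``let $C_k$ be the largest chain element below $D$'' and the split on $D\supsetneq C_i$ versus $D\parallel C_i$ never reaches this observation and does not terminate in a valid configuration.
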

\begin{proof}
    Suppose there exist $A,B,D \in \F\setminus\chain$ such that $A \supsetneq B$, $D \sim S_i$, and $\{A,B\} \parallel \{D,S_i\}$.
    
    Using Claim~\ref{claim:two_elements}, if the first bullet point holds for some $j\neq i$, then $C_j\sim S_i$ but since $B\subsetneq C_j\subsetneq A$, either $A\sim S_i$ or $B\sim S_i$, a contradiction. If the first bullet point holds and $j=i$, then $A\sim C_i$ and recall that $D\sim S_i$. Observe that $D\parallel C_i$ otherwise $D\sim A$ or $D\sim B$. As a result, $\{A,C_i\}\parallel\{D,S_i\}$ and $\{B,C_i\}\parallel\{D,S_i\}$ are both instances of Case~(\ref{it:1}).
    
    If the second bullet point holds and $j=i$, then $B\subseteq S_i\subseteq A$, which is a contradiction to the supposition of Case~(\ref{it:3}). If the second bullet point holds for some $j\neq i$, then $A\sim B$ and $C_j\sim S_i$ and $\{C_j,S_i\}\parallel\{B,A\}$, which is an instance of Case~(\ref{it:2}). Moreover, we will have either $B \subsetneq C_i$ or $A \supsetneq C_i$, meaning that either $\{A,C_i\}\parallel\{D,S_i\}$ or $\{B,C_i\}\parallel\{D,S_i\}$ is a Case~(\ref{it:1}).

    In conclusion, if an element $X \in \F$ witnesses $S_i$ as part of a Case~(\ref{it:3}) configuration, then $X$ will also witness $S_i$ as a Case~(\ref{it:1}) or Case~(\ref{it:2}).
\end{proof}

\begin{figure}[h]
\hfill
\begin{subfigure}[t]{0.4\textwidth}
\begin{tikzpicture}[scale=1,vertex/.style={draw=black, very thick, fill=white, circle, minimum width=2pt, inner sep=2pt, outer sep=1pt},edge/.style={very thick}]
    \coordinate (zero) at (0,0);
    \begin{pgfonlayer}{foreground}
    \node[vertex] (0) at ($(zero)+(0,-2)$) {};
    \node[left,xshift=-2pt] at (0) {$\emptyset$};
    \node[vertex] (1) at ($(zero)+(0,2)$) {};
    \node[left,xshift=-2pt] at (1) {$[n]$};
    \node[vertex] (A) at ($(zero) +(0.55,1)$) {};
    \node[above,yshift=2pt] at (A) {$A$};
    \node[vertex] (B) at ($(zero)+(1.5,1)$) {};
    \node[above,yshift=2pt] at (B) {$B$};
    \node[vertex] (C) at ($(zero) + (-0.6,0)$) {};
    \node[left,xshift=-3pt] at (C) {$C_i$};
    \node[vertex] (S) at ($(zero) + (0.2,0)$) {};
    \node[right,xshift=3pt] at (S) {$S_i$};
    \end{pgfonlayer}
    \begin{pgfonlayer}{background}
    \draw[edge, line width = 2.8pt] (0) to [out = 120, in = 275] (C);
    \draw[edge, line width = 2.8pt] (C) to [out = 85, in = 240] (1);
    \draw[edge] (C) -- (A);
    \draw[edge] (S) -- (B);
    \end{pgfonlayer}
\end{tikzpicture}
\caption{Case ~(\ref{it:1}), witnessing $S_i$. Alternatively, $A\subsetneq C_i$ and/or $B\subsetneq S_i$ are possible.}
\label{fig:case1}
\end{subfigure}
\hfill
\begin{subfigure}[t]{0.4\textwidth}
\begin{tikzpicture}[scale=1,vertex/.style={draw=black, very thick, fill=white, circle, minimum width=2pt, inner sep=2pt, outer sep=1pt},edge/.style={very thick}]
    \coordinate (one) at (0,0);
    \begin{pgfonlayer}{foreground}

    \node[vertex] (02) at ($(one)+(0,-2)$) {};
    \node[left,xshift=-2pt] at (02) {$\emptyset$};
    \node[vertex] (12) at ($(one)+(0,2)$) {};
    \node[left,xshift=-2pt] at (12) {$[n]$};
    \node[vertex] (A2) at ($(one)+(1.5,1)$) {};
    \node[above,yshift=2pt] at (A2) {$A$};
    \node[vertex] (B2) at ($(one) +(1.5,-1)$) {};
    \node[below,yshift=-2pt] at (B2) {$B$};
    \node[vertex] (C2) at ($(one) + (-0.6,0)$) {};
    \node[left,xshift=-3pt] at (C2) {$C_{j}$};
    \node[vertex] (Cplus) at ($(one) + (-0.55,0.6)$) {};
    \node[left,xshift=-3pt] at (Cplus) {$C_{j+1}$};
    \node[vertex] (Cminus) at ($(one) + (-0.55,-0.6)$) {};
    \node[left,xshift=-3pt] at (Cminus) {$C_{j-1}$};
    \node[vertex] (S2) at ($(one) + (0.2,0)$) {};
    \node[right,xshift=3pt] at (S2) {$S_{j}$};

    \node[vertex] (Splus) at ($(one) + (0.2,0.6)$){};
    \node[above,yshift = 2pt] at (Splus) {$S_{j+1}$};
    \node[vertex] (Sminus) at ($(one) + (0.2,-0.6)$){};
    \node[below,yshift = -2pt] at (Sminus) {$S_{j-1}$};
    \end{pgfonlayer}
    \begin{pgfonlayer}{background}

    \draw[edge, line width = 2.8pt] (02) to [out = 120, in = 275] (C2);
    \draw[edge, line width = 2.8pt] (C2) to [out = 85, in = 240] (12);

    \draw[edge] (S2) -- (A2);
    \draw[edge, line width = 2.5pt] (B2) -- (A2);
    \draw[edge] (B2) -- (S2);
    \draw[edge] (Cplus) -- (S2);
    \draw[edge] (Cminus) -- (S2);
    
        \draw[edge, line width = 2.8pt] (02) to [out = 120, in = 280] (Cminus);
        \draw[edge, line width = 2.8pt] (Cminus) to [out = 98, in = 273] (C2);
        \draw[edge, line width = 2.8pt] (C2) to [out = 87, in = 263] (Cplus);
    \draw[edge, line width = 2.8pt] (Cplus) to [out = 80, in = 240] (12);
    \draw[edge, line width = 2.5pt] (Splus) -- (C2);
    \draw[edge, line width = 2.5pt] (Sminus) -- (C2);
        
    \end{pgfonlayer}
\end{tikzpicture}
\caption{Case ~(\ref{it:2}), witnessing both $S_{j-1}$ and $S_{j+1}$. Alternatively, $A = S_j$ or $B = S_j$ are possible.}
\label{fig:case2}
\end{subfigure}
\hfill
\caption{The two main ways to witness shackles.}
\end{figure}

Finally, we establish that a member of $\F\setminus\chain$ cannot be a witness too often. 
\begin{claim}
    Let $F\in\F\setminus\chain$. If $F$ is a shackle, then $F$ witnesses at most two other shackles. If $F$ is not a shackle, then it witnesses at most four shackles.
    \label{claim:twofour}
\end{claim}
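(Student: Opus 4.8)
The plan is to fix $F\in\F\setminus\chain$ and use the case breakdown of Claims~\ref{cl:case breakdown} and~\ref{claim:no_case3}, together with the comparability pattern of $F$ against $\chain$, to pin down exactly which shackles $F$ can witness. The shackle case is quick. Write $F=S_k$; since $S_k\in\F$ we only need to count shackles $S_i$ with $i\neq k$. By Claims~\ref{cl:case breakdown} and~\ref{claim:no_case3}, if $F$ witnesses $S_i$ then $F$ is one of the two non-chain vertices $A,B$ of a Case~(\ref{it:1}) or Case~(\ref{it:2}) configuration. Playing the role of $A$ in Case~(\ref{it:1}) forces $F\parallel S_i$, and since distinct shackles $S_k,S_i$ are incomparable only when $|i-k|=1$, this gives $i\in\{k-1,k+1\}$. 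In each of the three remaining roles $F$ is forced incomparable to a chain vertex---to $C_i$ when $F=B$ in Case~(\ref{it:1}), and to $C_j$ (with $j\in\{i-1,i+1\}$, by the ``furthermore'' clause) in Case~(\ref{it:2})---and since $S_k$ is incomparable to exactly one chain vertex, namely $C_k$, this forces either $i=k$ (impossible) or $j=k$, hence again $i\in\{k-1,k+1\}$. Thus $F$ witnesses at most the two shackles $S_{k-1},S_{k+1}$.

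Now let $F$ not be a shackle and put $\alpha=\max\{\ell\ge 0:[\ell]\subseteq F\}$ and $\beta=\max F$. A short check using $F\notin\chain$ gives $0\le\alpha\le\beta-2$, and then $F\parallel C_i$ holds precisely for $\alpha+1\le i\le\beta-1$. Running through the four roles once more: $F=B$ in Case~(\ref{it:1}) requires $F\parallel C_i$ and $F\sim S_i$, forcing $i\in\{\alpha+1,\beta-1\}$; $F=A$ in Case~(\ref{it:1}) requires $F\sim C_i$ and $F\parallel S_i$, forcing $i\in\{\alpha,\beta\}$; $F=A$ in Case~(\ref{it:2}) has $S_j\subseteq F$ and $F\parallel C_j$, which forces $j=\alpha+1$ and hence $i\in\{\alpha,\alpha+2\}$; and $F=B$ in Case~(\ref{it:2}) has $F\subseteq S_j$ and $F\parallel C_j$, forcing $j=\beta-1$ and hence $i\in\{\beta-2,\beta\}$. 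So if $F$ witnesses $S_i$ then $i\in\{\alpha,\alpha+1,\alpha+2\}\cup\{\beta-2,\beta-1,\beta\}$; this already settles the case $\beta\le\alpha+3$, where the set on the right has at most four members.

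For $\beta\ge\alpha+4$ the crux is a pair of exclusions near $\alpha$ (and, by a symmetry noted below, near $\beta$). When $\beta\ge\alpha+4$, the index constraints force $S_{\alpha+1}$ to be witnessed only with $F$ in the role of $B$ in a Case~(\ref{it:1}) configuration; such a configuration has $S_{\alpha+1}\subseteq F$ and supplies a partner $A_1\in\F\setminus\chain$ with $A_1\sim C_{\alpha+1}$, $A_1\parallel F$ and $A_1\parallel S_{\alpha+1}$. Similarly $S_{\alpha+2}$ can be witnessed only through Case~(\ref{it:2}): with $F$ as the larger vertex $A$ (then the index $j$ of the ``furthermore'' clause equals $\alpha+1$ and there is a partner $B_2\in\F\setminus\chain$ with $\{\alpha+2\}\subseteq B_2\subseteq S_{\alpha+1}$ and $B_2\subsetneq F$), or---only possible when $\beta=\alpha+4$---with $F$ as the smaller vertex $B$ (then $j=\alpha+3$ and there is a partner $A_2\in\F\setminus\chain$ with $F\subsetneq A_2$, $S_{\alpha+3}\subseteq A_2$, $A_2\parallel C_{\alpha+3}$, $A_2\parallel S_{\alpha+2}$); and when $\beta=\alpha+4$, $S_{\alpha+3}$ can be witnessed only with $F$ as the vertex $B$ of a Case~(\ref{it:1}), which has $F\subseteq S_{\alpha+3}$ and a partner $B_3\in\F\setminus\chain$ with $B_3\sim C_{\alpha+3}$, $B_3\parallel F$, $B_3\parallel S_{\alpha+3}$. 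Tracking exactly which of $1,\dots,\alpha+3$ each of $A_1,B_2,B_3,A_2$ does and does not contain (these memberships are forced by its incomparabilities with $C_{\alpha+1}$, $C_{\alpha+3}$, $S_{\alpha+1}$, $S_{\alpha+3}$ and $F$), one then checks: if $F$ witnesses $S_{\alpha+1}$ and also witnesses $S_{\alpha+2}$ as a larger vertex, then $\{A_1,C_{\alpha+1}\}$ and $\{B_2,F\}$ form an induced copy of $2C_2$ inside $\F$; and if $F$ witnesses $S_{\alpha+3}$ and also witnesses $S_{\alpha+2}$ as a smaller vertex, then $\{B_3,C_{\alpha+3}\}$ and $\{F,A_2\}$ form one instead. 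Both contradict induced-$2C_2$-freeness of $\F$. Hence, if $\beta\ge\alpha+5$ (so $S_{\alpha+2}$ can only be a larger vertex), $F$ does not witness both $S_{\alpha+1}$ and $S_{\alpha+2}$; and if $\beta=\alpha+4$, $F$ does not witness all of $S_{\alpha+1},S_{\alpha+2},S_{\alpha+3}$.

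To finish, the complement-and-reversal map $S\mapsto\{\,n+1-\ell:\ell\in[n]\setminus S\,\}$ is an automorphism of $\B_n$ fixing $\chain$ and the set of shackles and interchanging the window $\{S_\alpha,S_{\alpha+1},S_{\alpha+2}\}$ of $F$ with the window $\{S_{\beta-2},S_{\beta-1},S_\beta\}$; applying the previous paragraph to the image of $F$ gives the mirror exclusions near $\beta$. For $\beta\ge\alpha+5$ the two windows are disjoint, and each accounts for at most two witnessed shackles ($S_\alpha$, respectively $S_\beta$, plus at most one of the inner pair), so $F$ witnesses at most four; for $\beta=\alpha+4$ the windows overlap in the single shackle $S_{\alpha+2}$, and $S_\alpha$, at most two of $\{S_{\alpha+1},S_{\alpha+2},S_{\alpha+3}\}$, and $S_{\alpha+4}$ give at most $1+2+1=4$ as well. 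I expect the main obstacle to be the bookkeeping in the previous paragraph: one must push the element-membership constraints on $A_1,B_2,B_3,A_2$ far enough to certify all six comparability relations of each claimed internal $2C_2$ in every sub-configuration (distinguishing, e.g., the two possibilities $A_1\subsetneq C_{\alpha+1}$ and $A_1\supsetneq C_{\alpha+1}$). The conceptual point that makes this go through is that the ``middle'' shackle $S_{\alpha+2}$ is never witnessed by a Case~(\ref{it:1}) configuration---its witnessing partner is always a genuinely off-chain related pair, which can therefore be played off against the Case~(\ref{it:1}) partner of an adjacent shackle.
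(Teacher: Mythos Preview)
Your argument is correct and follows essentially the same route as the paper: the role analysis (your $\alpha,\beta$ are the paper's $p,q$) produces the same six candidate shackles, and your key exclusion---that the Case~(\ref{it:1}) partner $A_1$ for $S_{\alpha+1}$ and the Case~(\ref{it:2}) partner $B_2$ for $S_{\alpha+2}$ together with $C_{\alpha+1}$ and $F$ form an internal $2C_2$---is exactly the paper's argument with $H=A_1$, $G=B_2$. One small correction: your complement-and-reversal map is an \emph{anti}-automorphism of $\B_n$ (it reverses inclusion) and carries $\F$ to a possibly different induced-$2C_2$-saturated family $\F'$, but since $2C_2$ is self-dual and the map permutes $\chain$ and the shackles, applying your $\alpha$-window exclusion to $F'$ in $\F'$ and pulling back still yields the claimed $\beta$-window exclusion for $F$ in $\F$, so the symmetry step is sound.
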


\begin{proof}
    Let $F$ be a shackle, specifically $F=S_k$. Recall the case breakdown from Claim~\ref{cl:case breakdown}.
    \begin{itemize}
        \item If $F$ witnesses $S_i$ in a Case~(\ref{it:1}) configuration, then it must be the $A$ set (see Figure~\ref{fig:case1}) because $F\sim C_i$. Furthermore, either $i=k-1$ or $i=k+1$, otherwise $F\sim S_i$. Thus $F$ can witness only $S_{k+1}$ or $S_{k-1}$ in this case.

        \item If $F$ witnesses $S_i$ in a Case~(\ref{it:2}) configuration, then either $i=k-1$ or $i=k+1$, otherwise $F\sim S_i$. In either case, $F$ could play the $A$ or $B$ role (see Figure~\ref{fig:case2}). Thus $F$ can witness only $S_{k+1}$ or $S_{k-1}$ in this case.

        \item If $F$ witnesses $S_i$ in a Case~(\ref{it:3}) configuration, then Claim~\ref{claim:no_case3} ensures that one of the previous two bullet points applies.
    \end{itemize}

    Consequently, if $F$ is a shackle, it can only participate as a witness for $S_{k+1}$ or $S_{k-1}$. 

    Now let $F$ not be a shackle. In particular, let $q$ be the least integer such that $F\subsetneq C_q$ and let $p$ be the greatest integer such that $F\supsetneq C_p$. Since $F$ is not a shackle, then $q\geq p+3$. Note that $F\subsetneq S_{q+1},S_{q+2},\ldots,S_{n-1}$ and $F\supsetneq S_{p-1},S_{p-2},\ldots,S_1$. 

    \begin{itemize}
        \item In a Case~(\ref{it:1}) configuration, as an $A$ vertex, $F$ can only witness the shackles $S_p$ and $S_q$ because it is related to each of $S_1,\ldots,S_{p-1},S_{q+1},\ldots,S_{n-1}$ and it is not related to each of $C_{p+1},\ldots,C_{q-1}$. 

        \item In a Case~(\ref{it:1}) configuration, as a $B$ vertex, $F$ can only witness the shackles $S_{p+1}$ and $S_{q-1}$ (and then, only if $F\supsetneq S_{p+1}$ or $F\subsetneq S_{q-1}$, respectively) because $F$ is related to $C_1,\ldots,C_{p},C_{q},\ldots,C_{n-1}$ and it is not related to each of $S_{p+2},\ldots,S_{q-2}$. 

        \item In a Case~(\ref{it:2}) configuration, as an $A$ vertex, $F$ can only witness the shackles $S_p$ and $S_{p+2}$. This is because the $A$ vertex in Figure~\ref{fig:case2} has $C_{j-1}$ as its largest subset in $\chain$ and in that case can only witness $S_{j-1}=S_{p}$ and $S_{j+1}=S_{p+2}$. 
    
        \item In a Case~(\ref{it:2}) configuration, as a $B$ vertex, $F$ can only witness the shackles $S_q$ and $S_{q-2}$. This is because the $B$ vertex in Figure~\ref{fig:case2} has $C_{j+1}$ as its smallest superset in $\chain$ and in that case can only witness $S_{j+1}=S_{q}$ and $S_{j-1}=S_{q-2}$.

        \item In a Case~(\ref{it:3}) configuration, then Claim~\ref{claim:no_case3} ensures that one of the previous four bullet points applies.
    \end{itemize}
    
    Consequently if $F$ is not a shackle, it can only participate as a witness for a member of the set $\{S_{p},S_{p+1},S_{p+2},S_{q-2},S_{q-1},S_{q}\}$.

    Furthermore, we will show that at most one member of $\{S_{p+1},S_{p+2}\}$ and at most one member of $\{S_{q-1},S_{q-2}\}$ are witnessed by $F$:

        If $S_{p+2}$ is witnessed by $F$, then $F$ is part of a Case~(\ref{it:2}) configuration as an $A$ vertex, and there exists some vertex $G \in \F \setminus \chain$ that acts as the $B$, as shown in Figure~\ref{fig:Fstruct}. If $F$ also witnesses $S_{p+1}$, then $F$ is part of a Case~(\ref{it:1}) configuration as a $B$ vertex, and there exists some vertex $H \in \F \setminus \chain$ with $H \sim C_{p+1}$ acting as the $A$ vertex.
        
        If $H$ is unrelated to $G$, then $\{C_{p+1},H\}\parallel\{F,G\}$ forms a $2C_2$ in $\F$, a contradiction. Therefore $H$ is related to $G$. If $H \subsetneq G$, we then have $H \subsetneq F$, a contradiction. Therefore $H \supsetneq G$. Because $G$ acts as the $B$ vertex in the Case~(\ref{it:2}) configuration, it is a subset of $S_{p+1}$, but not a subset of $C_p$. Thus we have $p+2 \in G$, and therefore $p+2 \in H$.
        
    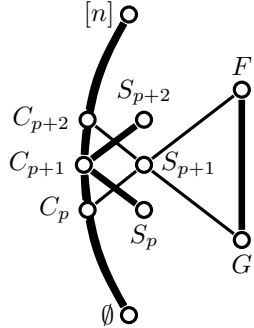
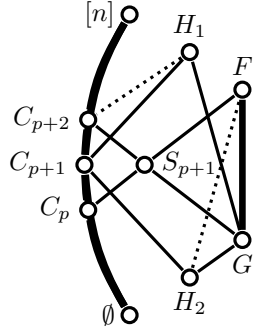
\begin{figure}[h]\hfill
        \begin{subfigure}[t]{0.4\textwidth} \vskip 0pt
            \begin{tikzpicture}[scale=1,vertex/.style={draw=black, very thick, fill=white, circle, minimum width=2pt, inner sep=2pt, outer sep=1pt},edge/.style={very thick}]
                \coordinate (one) at (0,0);
                \begin{pgfonlayer}{foreground}
                    \node[vertex] (02) at ($(one)+(0,-2)$) {};
                    \node[left,xshift=-2pt] at (02) {$\emptyset$};
                    \node[vertex] (12) at ($(one)+(0,2)$) {};
                    \node[left,xshift=-2pt] at (12) {$[n]$};
                    \node[vertex] (A2) at ($(one)+(1.5,1)$) {};
                    \node[above,yshift=2pt] at (A2) {$F$};
                    \node[vertex] (B2) at ($(one) +(1.5,-1)$) {};
                    \node[below,yshift=-2pt] at (B2) {$G$};
                    \node[vertex] (C2) at ($(one) + (-0.6,0)$) {};
                    \node[left,xshift=-3pt] at (C2) {$C_{p+1}$};
                    \node[vertex] (Cplus) at ($(one) + (-0.55,0.6)$) {};
                    \node[left,xshift=-3pt] at (Cplus) {$C_{p+2}$};
                    \node[vertex] (Cminus) at ($(one) + (-0.55,-0.6)$) {};
                    \node[left,xshift=-3pt] at (Cminus) {$C_p$};
                    
                    \node[vertex] (S2) at ($(one) + (0.2,0)$) {};
                    \node[right,xshift=3pt] at (S2) {$S_{p+1}$};
                    \node[vertex] (Splus) at ($(one) + (0.2,0.6)$){};
                    \node[above,yshift = 2pt] at (Splus) {$S_{p+2}$};
                    \node[vertex] (Sminus) at ($(one) + (0.2,-0.6)$){};
                    \node[below,yshift = -2pt] at (Sminus) {$S_{p}$};
                \end{pgfonlayer}
                \begin{pgfonlayer}{background}
                    \draw[edge, line width = 2.8pt] (02) to [out = 120, in = 275] (C2);
                    \draw[edge, line width = 2.8pt] (C2) to [out = 85, in = 240] (12);
                
                    \draw[edge] (S2) -- (A2);
                    \draw[edge, line width = 2.5pt] (B2) -- (A2);
                    \draw[edge] (B2) -- (S2);
                    \draw[edge] (Cplus) -- (S2);
                    \draw[edge] (Cminus) -- (S2);
    
                    \draw[edge, line width = 2.8pt] (02) to [out = 120, in = 280] (Cminus);
                    \draw[edge, line width = 2.8pt] (Cminus) to [out = 98, in = 273] (C2);
                    \draw[edge, line width = 2.8pt] (C2) to [out = 87, in = 263] (Cplus);
                    \draw[edge, line width = 2.8pt] (Cplus) to [out = 80, in = 240] (12);
                    \draw[edge, line width = 2.5pt] (Splus) -- (C2);
                    \draw[edge, line width = 2.5pt] (Sminus) -- (C2);
                \end{pgfonlayer}
            \end{tikzpicture}
            \caption{For F to witness $S_{p+2}$ requires this structure. It will therefore also witness $S_p$.}
            \label{fig:Fstruct}
        \end{subfigure}\hfill
        \begin{subfigure}[t]{0.4\textwidth} \vskip 0pt
            \begin{tikzpicture}[scale=1,vertex/.style={draw=black, very thick, fill=white, circle, minimum width=2pt, inner sep=2pt, outer sep=1pt},edge/.style={very thick}]
                \coordinate (one) at (0,0);
                \begin{pgfonlayer}{foreground}
                    \node[vertex] (02) at ($(one)+(0,-2)$) {};
                    \node[left,xshift=-2pt] at (02) {$\emptyset$};
                    \node[vertex] (12) at ($(one)+(0,2)$) {};
                    \node[left,xshift=-2pt] at (12) {$[n]$};
                    \node[vertex] (A2) at ($(one)+(1.5,1)$) {};
                    \node[above,yshift=2pt] at (A2) {$F$};
                    \node[vertex] (B2) at ($(one) +(1.5,-1)$) {};
                    \node[below,yshift=-2pt] at (B2) {$G$};
                    \node[vertex] (C2) at ($(one) + (-0.6,0)$) {};
                    \node[left,xshift=-3pt] at (C2) {$C_{p+1}$};
                    \node[vertex] (Cplus) at ($(one) + (-0.55,0.6)$) {};
                    \node[left,xshift=-3pt] at (Cplus) {$C_{p+2}$};
                    \node[vertex] (Cminus) at ($(one) + (-0.55,-0.6)$) {};
                    \node[left,xshift=-3pt] at (Cminus) {$C_p$};
                    
                    \node[vertex] (S2) at ($(one) + (0.2,0)$) {};
                    \node[right,xshift=3pt] at (S2) {$S_{p+1}$};

                    \node[vertex] (H1) at ($(one) + (0.8,1.5)$) {};
                    \node[above,yshift=2pt] at (H1) {$H_1$};
                    \node[vertex] (H2) at ($(one) + (0.8,-1.5)$) {};
                    \node[below,yshift=-2pt] at (H2) {$H_2$};
                \end{pgfonlayer}
                \begin{pgfonlayer}{background}
                    \draw[edge, line width = 2.8pt] (02) to [out = 120, in = 275] (C2);
                    \draw[edge, line width = 2.8pt] (C2) to [out = 85, in = 240] (12);
                
                    \draw[edge] (S2) -- (A2);
                    \draw[edge, line width = 2.5pt] (B2) -- (A2);
                    \draw[edge] (B2) -- (S2);
                    \draw[edge] (Cplus) -- (S2);
                    \draw[edge] (Cminus) -- (S2);
    
                    \draw[edge, line width = 2.8pt] (02) to [out = 120, in = 280] (Cminus);
                    \draw[edge, line width = 2.8pt] (Cminus) to [out = 98, in = 273] (C2);
                    \draw[edge, line width = 2.8pt] (C2) to [out = 87, in = 263] (Cplus);
                    \draw[edge, line width = 2.8pt] (Cplus) to [out = 80, in = 240] (12);
                    \draw[edge] (H2) to (B2);
                    \draw[edge] (H2) to (C2);

                    \draw[edge] (H1) to (B2);
                    \draw[edge] (H1) to (C2);

                    \draw[edge,style=dotted] (H1) to (Cplus);
                    \draw[edge,style=dotted] (H2) to (A2);
                \end{pgfonlayer}
            \end{tikzpicture}
            \caption{For F to witness $S_{p+1}$ requires $H$ to be related to $G$, but either relation between $H$ and $G$ results in a contradiction. Here we illustrate the different possibilities for $H$. The case where $H\supsetneq G$ is $H_1$ and the case where $H\subsetneq G$ is $H_2$.}
            \label{fig:Gstruct}
        \end{subfigure}
        \hfill
        \caption{F cannot witness both $S_{p+1}$ and $S_{p+2}$}
        \label{fig:enter-label}
    \end{figure}
        We know that $H$ is related to $C_{p+1}$, as seen in Figure ~\ref{fig:Gstruct}. Because $p+2 \in H$, we then know that it cannot be a subset of $C_{p+1}$, and is instead a superset of $C_{p+1}$. Therefore $H \supsetneq C_{p+1} \cup \{p+2\} = C_{p+2} \supsetneq S_{p+1}$, which is a contradiction, as $H$ must not relate to $S_{p+1}$ since it is acting as the $A$ vertex in a Case~(\ref{it:1}) configuration. Therefore $H$ cannot exist, and we see that $F$ cannot witness both $S_{p+1}$ and $S_{p+2}$.

    By a similar argument, inverting the direction of the relations, $F$ cannot witness both $S_{q-1}$ and $S_{q-2}$. This concludes the proof of Claim ~\ref{claim:twofour}.    
\end{proof}

Let $s$ be the number of shackles present in $\F$, and let $k$ be the total number of non-chain elements in $\F$. Every one of the $n-1$ shackles must either be in $\F$, or be witnessed by at least 2 elements in $\F$, so we have:
\begin{align*}
    n-1 &\leq s + (1/2)\bigl(2s + 4(k-s)\bigr) \\
    n-1 &\leq 2k\\
    n/2 - 1/2 &\leq k\\
\end{align*}

Adding $k$ to the $n+1$ elements in the chain yields $3n/2 + 1/2$ as the lower bound of $\sat^*(n,2C_2)$.

\section{Examples of $2C_2$-saturated families in $\power([n])$}
\label{sec:examples}

For $n\leq 6$, we have calculated, via exhaustive computer search, the number of saturating families. Up to $n=6$, we can confirm that $2n$ is the smallest possible size for a saturating set. We first use Theorem~\ref{thm:KLMPP} to establish that the family has a maximal chain, specifically the chain $\chain=\bigl\{\emptyset, \{1\}, \{1,2\}, \ldots, \{1,2,\ldots,n-1\}, [n]\bigr\}$. With those $n+1$ elements, we then pair a family $\F$ of size $2n$ with its dual (that is, we turn the Boolean lattice ``upside down'') to arrive at an isomorphic copy of $\F$. For $n\leq 6$ these are distinct families except in one case when $n$ is odd. 

For $n=3$, there are $5$ distinct induced $2C_2$ saturated families of size $2n$ (3 isomorphism classes). For $n=4$ there are $18$ with $9$ isomorphism classes. For $n=5$, there are $83$ with $42$ isomorphism classes. For $n=6$, there are $452$ with $226$ isomorphism classes. Fixing the chain $\chain$ establishes a linear order of the $n$ numbers by the elements in $\chain$ in which they first appear, making isomorphisms possible only through duality. All families up to $n=5$ are listed below in Appendix~\ref{sec:constructions}.

We note that Keszegh et al.~\cite{KLMPP} established an isomorphism class of induced-$2C_2$-saturated families of size $2n$ in $\power([n])$. Specifically, it consists of the chain $\chain$ and the singletons $\{2\}, \{3\}, \ldots, \{n\}$. Note that this is isomorphic to $\chain$ plus the anti-singletons $\overline{\{2\}}, \overline{\{3\}}, \ldots, \overline{\{n\}}$.

Inspired by the computer output, we were able to construct an infinite collection of additional families of induced-$2C_2$-saturated families of size $2n$. Specifically, the collection of families $\F_i^*$ consisting of the chain $\chain$ plus the sets 
\begin{align*}
    \{2\},\{3\},\ldots,\{i\},\overline{\{i\}},\overline{\{i+1\}},\ldots,\overline{\{n-1\}} 
\end{align*}
for some integer $2 \leq i \leq n-1$.

 In particular, this family is self-dual when $n$ is odd and $i = (n+1)/2$.
\begin{prop}
    The above family $\F_i^*$ is induced-$2C_2$-saturated. 
\end{prop}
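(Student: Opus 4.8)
The plan is to check the two defining conditions of induced-$2C_2$-saturation. We assume $n\ge 3$, so that the singletons and anti-singletons are distinct, and we record three facts about the non-chain members of $\F_i^*$ — the singletons $\{2\},\dots,\{i\}$ and the anti-singletons $\overline{\{i\}},\dots,\overline{\{n-1\}}$: any two singletons are incomparable, any two anti-singletons are incomparable, and $\{a\}\subseteq\overline{\{b\}}$ precisely when $a\neq b$; thus the only incomparable singleton--anti-singleton pair is $\{i\}\parallel\overline{\{i\}}$, which is the hinge of the argument.

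\emph{$\F_i^*$ is induced-$2C_2$-free.} Since the members of $\chain$ are pairwise comparable, an induced $2C_2$ in $\F_i^*$ can use at most one set of $\chain$: if $C_p\subsetneq C_q$ were one of its $2$-chains, the other $2$-chain would be of the form $\{a\}\subsetneq\overline{\{b\}}$ with $2\le a\le i\le b\le n-1$, and then the incomparabilities $\{a\}\parallel C_q$ and $\overline{\{b\}}\parallel C_p$ give $a>q$ and $b\le p<q$, contradicting $a\le i\le b$. Consequently either both $2$-chains look like $\{a_1\}\subsetneq\overline{\{b_1\}}$ and $\{a_2\}\subsetneq\overline{\{b_2\}}$, in which case the cross-incomparabilities $\{a_1\}\parallel\overline{\{b_2\}}$ and $\{a_2\}\parallel\overline{\{b_1\}}$ force $a_1=b_2$ and $a_2=b_1$, hence $a_1=a_2=i$, contradicting distinctness; or exactly one $2$-chain contains a chain element (its bottom $C_p$ or its top $C_q$), and the same squeeze $a\le i\le b\le p<q<a$ is contradictory.

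\emph{$\F_i^*\cup\{S\}$ contains an induced $2C_2$.} Given $S\notin\F_i^*$, set $\ell=\min\overline S$ and $m=\max S$; one checks that $1\le\ell<m\le n$ and $C_{\ell-1}\subsetneq S\subsetneq C_m$, so $S$ is incomparable to exactly $C_\ell,\dots,C_{m-1}$. If $m=\ell+1$ then $S$ is the shackle $S_\ell$ with $2\le\ell\le n-2$ (since $S_1,S_{n-1}\in\F_i^*$), and an explicit copy is written down according to the position of $\ell$: $\{\{\ell+1\},S_\ell,\{\ell\},C_\ell\}$ if $\ell<i$, the copy $\{\{i\},C_i,S_i,\overline{\{i\}}\}$ if $\ell=i$, and $\{C_\ell,\overline{\{\ell+1\}},S_\ell,\overline{\{\ell\}}\}$ if $\ell>i$. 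Now suppose $m\ge\ell+2$; I claim that at least one of the following holds, each yielding an induced copy inside $\F_i^*\cup\{S\}$:
\begin{itemize}
  \item $S$ has an element $a$ with $\ell+2\le a\le i$: then $\{\{a\},S,C_\ell,C_{\ell+1}\}$ is a copy;
  \item $\overline S$ has an element $b$ with $i\le b\le m-2$: then $\{S,\overline{\{b\}},C_j,C_{j+1}\}$ with $j=\max(\ell,b)$ is a copy (note $\ell\le j\le m-2$, so $C_j,C_{j+1}$ are incomparable to $S$, and $j\ge b$ makes them incomparable to $\overline{\{b\}}$);
  \item $i\notin S$ and $i+1\in S$: then $\{S,\overline{\{i\}},\{i\},C_i\}$ is a copy;
  \item $i\in S$ and $i-1\notin S$: then $\{\{i\},S,C_{i-1},\overline{\{i\}}\}$ is a copy.
\end{itemize}
Exhaustiveness is a short argument: if the last two fail then $\{i,i+1\}\cap S=\emptyset$ or $\{i-1,i\}\subseteq S$; in the first case $i\in\overline S$, so either the second bullet applies with $b=i$, or else $m\le i-1$ and then the first bullet applies with $a=m$; in the second case $i\in S$, so the failure of the first bullet forces $\ell\ge i$, whence $\ell\in\overline S\cap\{i,\dots,m-2\}$ contradicts the failure of the second bullet. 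Each of the displayed copies is then verified by checking the six (in)comparabilities of the quadruple.

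\emph{Main obstacle.} Freeness and the first two bullets are routine once the comparability pattern of the singletons and anti-singletons is recorded; the substance is the analysis of sets $S$ that hug the chain near coordinate $i$ — the shackle cases and the last two bullets — where one has to choose correctly among $\{i\},C_{i-1},C_i,\overline{\{i\}}$ for the partner of $S$, and, more delicately, confirm that the case split on the membership of $i-1,i,i+1$ in $S$ leaves nothing uncovered (it is easy to overlook a boundary set such as $S=[i+1]\setminus\{i-1\}$, which the fourth bullet handles).
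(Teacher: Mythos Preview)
Your proof is correct, though it takes a genuinely different route from the paper's, especially for saturation.

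\textbf{Freeness.} The paper argues structurally: any chain element is comparable to all the singletons in $\F_i^*$ or to all the anti-singletons, so at most one chain element can appear in a $2C_2$; with three non-chain elements one is forced to use both $\{i\}$ and $\overline{\{i\}}$, and then no compatible partners exist. You instead chase index inequalities directly. This works, but the sentence ``the same squeeze $a\le i\le b\le p<q<a$'' for the one-chain-element case is imprecise: when the lone chain element is a bottom $C_p$ (paired with some $\overline{\{b'\}}$) there is no $q$, and the actual contradiction is $b\ge i=b'>p\ge b$; dually when it is a top $C_q$. The idea is right and the reader can reconstruct it, but the displayed chain of inequalities does not literally apply to that sub-case.

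\textbf{Saturation.} Here the two proofs diverge more substantially. The paper splits only on whether $i\in X$ and then invokes duality: if $i\in X$ and some singleton $\{j\}$ of $\F_i^*$ is incomparable to $X$, one writes down $\{\{i\},X\}\parallel\{\{j\},\overline{\{i\}}\}$; otherwise $X\supseteq [i]$ and a copy is built from two anti-singletons and a chain element. This is short and exploits the symmetry of the construction. Your argument instead introduces $\ell=\min\overline{S}$ and $m=\max S$, treats the shackles $m=\ell+1$ separately, and for $m\ge\ell+2$ runs a four-bullet case analysis keyed to the membership of $i-1,i,i+1$ in $S$. This is longer but entirely self-contained (no appeal to duality), makes every witnessing quadruple explicit, and dovetails with the shackle machinery from Section~\ref{sec:mainproof}. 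Your exhaustiveness check is the delicate point and it is handled correctly: the failure of bullets~3 and~4 forces either $\{i,i+1\}\cap S=\emptyset$ or $\{i-1,i\}\subseteq S$, and in each branch you recover bullet~1 or bullet~2 (in the second branch you actually get $\ell\ge i+1$, slightly stronger than the $\ell\ge i$ you state, since $i-1,i\in S$ rules out $\ell\in\{i-1,i\}$).
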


\begin{proof}
We first show that $\F_i^*$ is induced-$2C_2$-free. Observe that the Hasse diagram of singletons and anti-singletons form a complete bipartite graph except for a missing edge between $\{i\}$ and $\overline{\{i\}}$. We also note that the chain element $C_{i}$ is a superset of all of the singletons, and the chain element $C_{i-1}$ is a subset of all of the anti-singletons.

If an induced copy of $2C_2$ has two elements of the chain $\chain$, then they must be related to each other, and be unrelated to the other two elements. However, any chain element $C_j$ must be related to either all the singletons in $\F_i^*$, or all the anti-singletons in $\F_i^*$, and the only elements unrelated to $C_j$ will form an antichain. So an induced copy of $2C_2$ can have at most one chain element.

Therefore, at least 3 elements come from the non-chain elements of $\F_i^*$. 
This is only possible if two of those three elements are $\{i\}$ and $\overline{\{i\}}$. Now we need a paired element for each one. However, the elements of $\F_i^*$ that relate to $\{i\}$ all relate to every element that is related to $\overline{\{i\}}$. So it is impossible to create a $2C_2$ within $\F_i^*$.

Now we show that $\F_i^*$ is $2C_2$ saturated. Consider any element $X\notin\F_i^*$. Either $X\supset\{i\}$ or $X\subset\overline{\{i\}}$. 

If $X\supset\{i\}$, and there exists a singleton $\{j\}$ in $\F_i^*$ that is unrelated to $X$, then the four elements $\bigl\{\{i\}, X\bigr\} \parallel \bigl\{\{j\}, \overline{\{i\}}\bigr\}$ form a $2C_2$.

If $X\supset\{i\}$ and is also a superset of every other singleton in $\F_i^*$ (hence $X\supseteq [i]$), then take $j$ to be the smallest number that is not contained in $X$, and take $k > j$ to be a number that is contained in $X$. Those $j$ and $k$ must exist because $X\not\in\chain$.  Then the four elements $\bigl\{C_j, \overline{\{k\}}\bigr\} \parallel \bigl\{X, \overline{\{j\}}\bigr\}$ forms a $2C_2$.

If $X\subset\overline{\{i\} }$, then we follow the same argument, but in the dual.
\end{proof}

\section{Conclusions}
\label{sec:conc}
The observation that shackles are important to identifying induced-$2C_2$-saturation was important for our main result but it may be useful for improving the lower bound. For example, for $n\leq 6$ we observed that if $\F$ is an induced-$2C_2$-saturated family of size $2n$ and $\F\setminus\chain$ has no shackle, then $\F\setminus\chain$ forms an antichain. However, we suspect that this will not hold for larger values of $n$. 

Based on the data we have accumulated, we believe that $\sat^*(n,2C_2)=2n$ for all values of $n$. Moreover, we believe there are many nonisomorphic families that achieve this bound. 

\section{Acknowledgments}
Martin's research was partially supported by a grant from the Simons Foundation \#709641 and this research was partially done while Martin was on an MTA Distinguished Guest Scientist Fellowship 2023 at the HUN-REN Alfr\'ed R\'enyi Institute of Mathematics. Veldt's research was supported by a grant from the National Science Foundation DMS-1839918 (RTG). Both authors are indebted to the Alfr\'ed R\'enyi Institute of Mathematics for hosting Veldt for a collaboration visit. They would also like to thank Bal\'azs Patk\'os for helpful comments on the manuscript. In addition, we are indebted to two anonymous referees for their very careful reading and identifying errors and improving the clarity of the manuscript. 

\section{Conflict of Interest and Data Availability Statements}
On behalf of all authors, the corresponding author states that there is no conflict of interest.

Relevant data is provided in Appendix~\ref{sec:constructions}. The Python script used to generate it is available upon request.

\newpage
\bibliographystyle{alpha}
\bibliography{bibli}
\newpage
\appendix
\section{Known saturating sets for $n \leq 5$}
\label{sec:constructions}
In this section we give the results from the computer output of the families that are induced-$2C_2$-saturated in $\power\bigl([n]\bigr)$ and are of size $2n$. Each such family contains the maximum chain $\chain=\bigl\{\emptyset, [1], [2],\ldots,[n-1],[n]\bigr\}$. What follows is the enumeration of the other sets. For shorthand, we denote, for example, the set $\{2,3,5\}$ by $235$. \\~\\


\noindent When $n = 3$: \\~\\ 
{\footnotesize
\begin{tabular}{|c|c|}
\hline
  Set & Dual \\ \hline
  $\{2,3\}$ & $\{13,23\}$  \\ \hline
  $\{2,23\}$ & $\{3,13\}$  \\ \hline
  $\{2,13\}$ & Itself \\ \hline
\end{tabular}
}
\vfill

\noindent When $n = 4$: \\~\\
{\footnotesize
\begin{tabular}{|c|c||c|c|}\hline
    Set & Dual & Set & Dual\\ \hline
    $\{2,3,4\}$ & $\{124,134,234\}$ & $\{2,3,14\}$ & $\{124,134,23\}$\\ \hline
    $\{2,3,24\}$ & $\{124,134,24\}$ & $\{2,3,124\}$ & $\{124,134,2\}$\\ \hline
    $\{2,4,13\}$ & $\{124,234,13\}$ & $\{2,4,23\}$ & $\{124,234,14\}$\\ \hline
    $\{2,4,124\}$ & $\{124,234,2\}$ & $\{3,4,13\}$ & $\{134,234,13\}$\\ \hline
    $\{4,13,23\}$ & $\{13,14,234\}$ & &\\ \hline 
\end{tabular}
}
\vfill
\noindent When $n = 5$: \\~\\
{\footnotesize
\begin{tabular}{|c|c||c|c|}\hline
    Set & Dual & Set & Dual\\ \hline
    $\{2,3,4,5\}$ & $\{1235,1245,1345,2345\}$ &$\{2,3,4,15\}$ & $\{234,1235,1245,1345\}$\\ \hline
    $\{2,3,4,25\}$ & $\{235,1235,1245,1345\}$ &$\{2,3,4,125\}$ & $\{23,1235,1245,1345\}$ \\ \hline
    $\{2,3,4,1235\}$ & $\{2,1235,1245,1345\}$ &$\{2,3,5,14\}$& $\{134,1235,1245,2345\}$ \\ \hline
    $\{2,3,5,24\}$ & $\{135,1235,1245,2345\}$ & $\{2,3,5,124\}$ & $\{13,1235,1245,2345\}$\\ \hline
    $\{2,3,5,1235\}$ & $\{2,1235,1245,2345\}$ & $\{2,3,1235,1245\}$ & Itself \\ \hline
    $\{2,4,5,13\}$ & $\{124,1235,1345,2345\}$ & $\{2,4,5,23\}$ & $\{125,1235,1345,234\}$ \\ \hline
    $\{2,4,5,124\}$ & $\{13,1235,1345,2345\}$ & $\{2,4,13,1235\}$ & $\{2,124,1235,1345\}$ \\ \hline
    $\{2,4,23,1235\}$ & $\{2,125,1235,1345\}$ & $\{2,5,13,1235\}$ & $\{2,124,1235,2345\}$ \\ \hline
    $\{2,5,23,1235\}$ & $\{2,125,1235,2345\}$ & $\{2,5,124,134\}$ & $\{13,14,1235,2345\}$ \\ \hline
    $\{2,5,124,234\}$ & $\{13,15,1235,2345\}$ & $\{2,124,125,1345\}$ & $\{4,13,23,1235\}$ \\ \hline
    $\{2,124,125,2345\}$ & $\{5,13,23,1235\}$ & $\{3,4,5,13\}$ & $\{124,1245,1345,2345\}$ \\ \hline
    $\{3,4,13,1235\}$ & $\{2,124,1245,1345\}$ & $\{3,5,13,1235\}$ & $\{2,124,1245,2345\}$ \\ \hline
    $\{3,14,125,245\}$ & $\{23,34,135,1245\}$ & $\{3,15,124,245\}$ & $\{13,34,234,1245\}$ \\ \hline
    $\{3,24,125,145\}$ & $\{23,35,134,1245\}$ & $\{3,25,124,145\}$ & $\{13,35,235,1245\}$\\ \hline
    $\{4,5,13,23\}$ & $\{124,125,1345,2345\}$ & $\{4,13,15,235\}$ & $\{25,124,234,1345\}$ \\ \hline
    $\{4,13,125,235\}$ & $\{23,25,124,1345\}$ & $\{4,13,135,235\}$ & $\{24,25,124,1345\}$ \\ \hline
    $\{4,15,124,235\}$ & $\{13,25,234,1345\}$ & $\{4,23,125,135\}$ & $\{23,24,125,1345\}$ \\ \hline
    $\{4,25,124,135\}$ & $\{13,24,235,1345\}$ & $\{5,13,14,234\}$ & $\{15,124,134,2345\}$ \\ \hline
    $\{5,13,124,234\}$ & $\{13,15,124,2345\}$ & $\{5,13,134,234\}$ & $\{14,15,124,2345\}$ \\ \hline
    $\{5,14,124,234\}$ & $\{13,15,134,2345\}$ & $\{5,23,124,134\}$ & $\{13,14,125,2345\}$ \\ \hline
    $\{5,24,124,134\}$ & $\{13,14,135,2345\}$ & $\{5,124,134,234\}$ & $\{13,14,15,2345\}$ \\ \hline
\end{tabular}
}
\end{document}